\theoremstyle{plain}
\newtheorem{theorem}{Theorem}[section]
\newtheorem{proposition}[theorem]{Proposition}
\newtheorem{corollary}[theorem]{Corollary}
\newtheorem{lemma}[theorem]{Lemma}
\newtheorem{assumption}[theorem]{Assumption}
\newtheorem{definition}[theorem]{Definition}
\def\clo#1{\overline{#1}}
\def\text#1{\mbox{#1}}
\def\nullsp#1{\text{{\textit{null}}($#1$)}}
\def\rangesp#1{\text{{\textit{range}}($#1$)}}
\newcommand{\ra}{\rangle}
\newcommand{\la}{\langle}
\newcommand{\Kop}{\mathcal{K}}
\newcommand{\Sph}{\mathbb{S}}
\newcommand{\Gin}{(\partial \Omega \times \Sph)_{-}}
\newcommand{\Gpm}{(\partial \Omega \times \Sph)_{\pm}}
\newcommand{\Ws}{\mathbb{V}^{0}}
\newcommand{\Wss}{\mathbb{V}^{1}}
\newcommand{\Ts}{\mathbb{T}}
\begin{document}

\title[]{Time reversal for radiative transport with applications to inverse and control problems}

\author{Sebastian Acosta}

\address{Computational and Applied Mathematics, Rice University, Houston, TX 77005}
\ead{sebastian.acosta@rice.edu}

\begin{abstract}
In this paper we develop a time reversal method for the radiative transport equation to solve two problems: an inverse problem for the recovery of an initial condition from boundary measurements, and the exact boundary controllability of the transport field with finite steering time. Absorbing and scattering effects, modeled by coefficients with low regularity, are incorporated in the formulation of these problems. This time reversal approach leads to a convergent iterative procedure to reconstruct the initial condition provided that the scattering coefficient is sufficiently small in the $L^{\infty}$ norm. Then, using duality arguments, we show that the solvability of the inverse problem leads to exact controllability of the transport field with minimum-norm control obtained constructively. The solution approach to both of these problems may have applications in areas such as optical imaging and optimization of radiation delivery.
\end{abstract}

%Uncomment for PACS numbers title message
%\pacs{00.00, 20.00, 42.10}
% Keywords required only for MST, PB, PMB, PM, JOA, JOB? 
%\vspace{2pc}
%\noindent{\it Keywords}: Article preparation, IOP journals
% Uncomment for Submitted to journal title message
%\submitto{\JPA}

% Comment out if separate title page not required
% \maketitle

%%%%%%%%%%%%%%%%%%%%%%%%%%%%%%%%%%%%%%%%%%%%%%%%%%
%%%%%%%%%%%% NEW SECTION %%%%%%%%%%%%%%%%%%%%%%%%% %%%%%%%%%%%%%%%%%%%%%%%%%%%%%%%%%%%%%%%%%%%%%%%%%

\section{Introduction} \label{Section.Intro}

The radiative transport equation, also known as the linear Boltzmann equation, governs the propagation of particles as they interact with the underlying medium. Hence, this equation has applications in various scientific disciplines including optics, astrophysics, nanotechnology, and biology. The mathematical modeling for some of these applications is developed in the following books \cite{Cas-Zwe-1967,Dau-Lio-1993,Agoshkov-1998,Mok-1997,Cer-Gab-2007} among others. This publication is primarily motivated by the connection between transport phenomena and applications such as optical imaging and the optimization of radiation therapy. However, the underlying assumption in this paper is that the medium is \textit{weakly scattering} which may not be verified in some applications. We construct a time reversal method to solve two problems governed by the radiative transport equation: the recovery of an initial condition from boundary measurements, and the exact boundary controllability of the transport field. We shall refer to the former as the \textit{inverse source problem}, and to the latter as the \emph{control problem}.

\subsection{The inverse source problem}

The understanding of transport problems leads to imaging methods for biological media with particular applications in medicine \cite{Wang-Wu-2007,Arr-1999,Gib-Heb-Arr-2005,Kim-Mos-2006,Bal-2009-Rev,Ren-2010}. Another area of interest is the development of wave-based imaging techniques suited for turbid or heavily cluttered media modeled with stochastic differential equations. The connection to the radiative transport equation is provided by the fact that the Wigner transform of stochastic wave fields satisfies the transport equation in specific asymptotic regimes. For details and recent developments in this theory, we refer to \cite{Ryzhik-Papanico-Keller-1996,Bor-Iss-Tso-2010,FGPS-2007,Bal-Pap-Ryz-2002,Gar-Pap-2007,Gar-Sol-2008,Bal-Kom-Ryz-2010,Bal-Ren-2008,Bal-Pinaud-2007,Bal-Pinaud-2011,Luk-Spo-2007}.

The general radiative transport problem is governed by the following system,
\begin{eqnarray*}
& \frac{1}{c} \frac{\partial u}{\partial t} + (\theta \cdot \nabla) u + \mu_{\rm a} u + \mu_{\rm s} (I - \Kop) u = f \quad &\text{in $(0,\infty) \times (\Omega \times \Sph) $},  \\
& u = u_{0} \quad &\text{on $\{ t = 0 \} \times (\Omega \times \Sph)$},
\end{eqnarray*}
augmented by a prescribed in-flow profile which we assume to be vanishing. The properties of the medium are modeled by the absorption coefficient $\mu_{\rm a}$, the scattering coefficient $\mu_{\rm s}$ and the scattering operator $\Kop$. The precise definitions and assumptions concerning these coefficients are made in Section \ref{Section:MainResults}. The transport equation has solutions $u=u(t,x,\theta)$ representing the density of radiation at time $t \in [0,\infty)$, position $x \in \Omega \subset \mathbb{R}^n$, moving in the direction $\theta \in \Sph$ at speed $c > 0$. Here $\Sph$ denotes the unit sphere in $\mathbb{R}^n$. The driving sources in this problem are the actual forcing term in the right-hand side of the equation denoted by $f$ and the initial condition $u_{0}$.

For inverse source problems in transport theory, the goal is to reconstruct some of the driving sources in the formulation of the transport problem from the knowledge of out-flowing boundary data. This theory can be divided into two main areas, namely, transient and stationary problems. To the best of our knowledge, there is no rigorous work concerning the reconstruction of the initial condition $u_{0}$ from boundary measurements for general heterogeneous, absorbing, scattering media. In this paper, we partially fill that void by developing an iterative method, suited for weakly scattering media, to recover the unknown initial condition. See also Section \ref{Section:BeyondWeak} for a brief discussion on how to go beyond the weak scattering assumption.

Most works in the literature are concerned with the stationary case and the recovery of isotropic sources $f=f(x)$. Among those that address uniqueness and stability of reconstructions in scattering media, we highlight \cite{Bal-Tam-2007-01,Ste-Uhl-2008-01,Hub-2011}. We consider our work the counterpart of \cite{Bal-Tam-2007-01} for the time-dependent case. In fact, based on a Neumann series argument, an analogue to that of \cite{Bal-Tam-2007-01}, we have derived an iterative method for the recovery of the unknown initial condition. The method is convergent provided that the scattering coefficient is sufficiently small. This claim is made precise in Section \ref{Section:MainResults} and then proved in Section \ref{Section:InverseProblem}.

For the stationary case, the use of the Neumann series is limited to certain smallness condition for the anisotropic portion of the scattering kernel as shown by Bal and Tamasan \cite{Bal-Tam-2007-01}. Stefanov and Uhlmann \cite{Ste-Uhl-2008-01} by-pass this condition by reducing the inverse source problem to Fredholm form, showing that for generic media an unknown isotropic source can be uniquely recovered in a stable manner from boundary measurements. Their proof assumes full boundary data and is valid for all absorption and scattering coefficients in an open and dense subset of certain normed spaces. The case of partial data has been recently addressed by Hubenthal \cite{Hub-2011} whose approach is an extension of \cite{Ste-Uhl-2008-01}. He is able to show that a source is recoverable if it is supported on certain regions that are \textit{visible} from the accessible portion of the boundary.

We also wish to mention some early works by Larsen \cite{Larsen-1975} and Siewert \cite{Siewert-1993} for special geometries or symmetries. A more recent practical work was carried out by Kim and Moscoso \cite{Kim-Mos-2006} for the problem in half-space with constant coefficients. Using appropriate Green's functions, they developed explicit formulae for the recovery of a point source or a piecewise constant source supported in a box. Most other studies for the stationary inverse source problem are concerned with non-scattering media and make use of the (attenuated) Radon transform and mathematical tools from integral geometry and microlocal analysis. An excellent literature review is found in \cite[Section 7]{Bal-2009-Rev}.

\subsection{The control problem}
Control theory for PDEs is a broad subject which has been investigated by a large number of researchers. We direct the reader to the following publications \cite{Las-Tri-2000,Zua-2007,Lions-1988,Fur-2000,Bar-Leb-Rau-1992,Ell-Mas-2002} some of which contain extended lists of references and overviews of important developments. From these references, it is clear that control theory has been comprehensibly developed for many PDEs of mathematical physics (wave, heat, Maxwell, elasticity, Schrodinger). However, this is not the case for the Boltzmann equation, not even in the linearized case. In fact, the exact controllability for transient radiative transport in heterogeneous media was first established in 2007 by Klibanov and Yamamoto \cite{Kli-Yam-2007}. We develop here an alternative proof for the exact controllability of the transport equation. As in the case of the inverse problem, we are particularly motivated by applications in medicine. For instance, radiation delivery for cancer therapy must be controlled in order to effectively destroy cancerous cells while minimizing damage to surrounding healthy tissues \cite{Shep-Ferr-1999}.

The exact controllability for radiative transport is to find inflow boundary conditions over a window of \textit{steering} time to drive the transport field to a desired final state. The control data is usually not unique, but the first step is to establish its existence. The approach developed in \cite{Kli-Yam-2007} employs Carleman estimates which yield continuous observability even for time-dependent absorption and scattering coefficients, and also for optimal steering time. As a consequence, their results are much stronger than ours. However, we base our work on the analysis of the inverse problem, which is solved using time reversal. Therefore, our work reveals an alternative approach with added value worth reporting in this paper. The precise statement with regard to exact controllability is given in Section \ref{Section:MainResults} and we provide a proof in Section \ref{Section:ControlProblem}.

%%%%%%%%%%%%%%%%%%%%%%%%%%%%%%%%%%%%%%%%%%%%%%%%%
%%%%%%%%%%% NEW SECTION %%%%%%%%%%%%%%%%%%%%%%%%% %%%%%%%%%%%%%%%%%%%%%%%%%%%%%%%%%%%%%%%%%%%%%%%%%
 
\section{Notation and statement of main results} \label{Section:MainResults}

In this section we state the direct problem for transient radiative transport and its associated time-reversed problem. We also review some preliminary facts in order to state our main results in the proper mathematical framework. The formulation pursued here allows for heterogeneous media modeled by coefficients with low regularity. 

We assume that $\Omega \subset \mathbb{R}^n$ is a bounded convex domain with smooth boundary $\partial \Omega$. The unit sphere in $\mathbb{R}^n$ is denoted by $\Sph$. The respective outflow and inflow portions of the boundary are
\begin{eqnarray*}
\Gpm = \{ (x,\theta) \in \partial \Omega \times \Sph \, : \, \pm \, \nu(x) \cdot \theta > 0 \} 
\end{eqnarray*}
where $\nu$ denotes the outward unit normal vector on $\partial \Omega$. It will be assumed that the particles travel at a fixed speed $c > 0$. In the rest of the paper, we will often make reference to the following scales:
\begin{itemize}
\item[-] $l = \text{\rm diam}(\Omega)$, the diameter of the bounded region $\Omega$, and
\item[-] $T = \text{\rm diam}(\Omega) / c$.
\end{itemize}

Now we define the appropriate Hilbert spaces over which the radiative transport problem is well-posed. First, we denote by $\Ws$ and $\Wss$ the completion of $C^{1}(\clo{\Omega} \times \Sph)$ with respect to the norms associated with the following inner products,
\begin{eqnarray}
&& \la u , w \ra_{\Ws} =  \la u , w \ra_{L^2(\Omega \times \Sph)} \\ \label{Eqn.146}
&& \la u , w \ra_{\Wss} = l^2 \la \theta \cdot \nabla u , \theta \cdot \nabla w \ra_{\Ws} + \la u , w \ra_{\Ws} +  l \la |\nu \cdot \theta| u , w \ra_{L^2(\partial \Omega \times \Sph)}  \label{Eqn.148}
\end{eqnarray}
where $\nabla$ denotes the weak gradient with respect to the spatial variable $x \in \Omega$. Now, denote by $\Ts$ the trace space defined as the completion of $C(\partial \Omega \times \Sph)$ with respect to the norm associated with the following inner product,
\begin{eqnarray}
&& \la u , w \ra_{\Ts} = l \la |\nu \cdot \theta| u , w \ra_{L^2(\partial \Omega \times \Sph)}. \label{Eqn.150}
\end{eqnarray}
We also have the spaces $\Ts_{\pm}$ denoting the restriction of functions in $\Ts$ to the in- and out-flow portions of the boundary $\partial \Omega \times \Sph$, respectively. Functions in $\Wss$ have well-defined traces on the space $\Ts$ as asserted by the following lemma, whose proof is found in \cite{Agoshkov-1998,Mok-1997,Ces-1985,Ces-1984, Egg-Sch-2012-01,Man-Ress-Star-2000-01}. 
 
\begin{lemma} \label{Lemma.101}
The trace mapping $u \mapsto u|_{\partial \Omega}$ defined for $C^{1}(\clo{\Omega} \times \Sph)$ can be extended to a bounded operator $\gamma : \Wss \to \Ts$. Moreover, $\gamma : \Wss \to \Ts$ is surjective. Analogous claims hold for the partial trace maps $\gamma_{\pm} : \Wss \to \Ts_{\pm}$.
\end{lemma}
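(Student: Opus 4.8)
The plan is to get boundedness essentially for free from the way the norms are defined, and to concentrate the real effort on surjectivity, which I would establish by an explicit lifting obtained by transporting boundary data along characteristics.

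\emph{Boundedness.} For $u \in C^{1}(\clo{\Omega}\times\Sph)$ the classical restriction $u|_{\partial\Omega}$ is well defined, and by the very definition of the $\Ts$ inner product together with the $\Wss$ inner product, the quantity $\|u|_{\partial\Omega}\|_{\Ts}^{2} = l\,\la |\nu\cdot\theta|\,u,u\ra_{L^{2}(\partial\Omega\times\Sph)}$ is literally one of the three nonnegative summands constituting $\|u\|_{\Wss}^{2}$, so that $\|u|_{\partial\Omega}\|_{\Ts}\le\|u\|_{\Wss}$. Hence $u\mapsto u|_{\partial\Omega}$ is a contraction from the dense subspace $C^{1}(\clo{\Omega}\times\Sph)$ into $\Ts$, and by the bounded linear extension theorem it extends uniquely to a bounded operator $\gamma:\Wss\to\Ts$ with $\|\gamma\|\le 1$. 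Restricting the same inequality to $\Gpm$ yields the boundedness of $\gamma_{\pm}:\Wss\to\Ts_{\pm}$.

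\emph{The organizing identity.} The tool behind the remaining estimates is Green's formula for the constant field $\theta$: for $u,w\in C^{1}(\clo{\Omega}\times\Sph)$ and fixed $\theta$ one has $\theta\cdot\nabla(uw)=\nabla\cdot(\theta uw)$, so integrating over $\Omega\times\Sph$ gives
\[
\la\theta\cdot\nabla u,w\ra_{\Ws}+\la u,\theta\cdot\nabla w\ra_{\Ws}=\la(\nu\cdot\theta)u,w\ra_{L^{2}(\partial\Omega\times\Sph)}.
\]
Taking $w=u$ controls only the \emph{difference} $\|\gamma_{+}u\|_{\Ts_{+}}^{2}-\|\gamma_{-}u\|_{\Ts_{-}}^{2}$ by $\|u\|_{\Ws}\,\|\theta\cdot\nabla u\|_{\Ws}$, which by itself is not a two-sided bound. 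To obtain genuine control I would integrate along the characteristics $s\mapsto x+s\theta$. Since $\Omega$ is convex with smooth boundary, each characteristic is a chord meeting $\partial\Omega$ in exactly one inflow and one outflow point, and the change of variables from $x\in\Omega$ to (entry point, arclength) carries $dx$ into $|\nu\cdot\theta|\,d\sigma\,ds$. This is exactly where the weight $|\nu\cdot\theta|$ in the definition of $\Ts$ originates, and it also identifies the completion $\Wss$ with $\{u\in\Ws:\theta\cdot\nabla u\in\Ws,\ \gamma u\in\Ts\}$.

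\emph{Surjectivity by lifting.} Given inflow data $g\in\Ts_{-}$, let $\tau(x,\theta)\ge 0$ be the backward exit time and $x_{-}(x,\theta)\in\partial\Omega$ the entry point of the chord through $(x,\theta)$, and set $(E_{-}g)(x,\theta)=e^{-\tau(x,\theta)/l}\,g\bigl(x_{-}(x,\theta),\theta\bigr)$, which solves $\theta\cdot\nabla u+l^{-1}u=0$ with $u|_{\Gin}=g$, so $\gamma_{-}E_{-}g=g$. The change of variables above converts $\|E_{-}g\|_{\Ws}^{2}$, the identity $l^{2}\|\theta\cdot\nabla E_{-}g\|_{\Ws}^{2}=\|E_{-}g\|_{\Ws}^{2}$, and the outflow trace $\|\gamma_{+}E_{-}g\|_{\Ts_{+}}^{2}$ into integrals of $|g|^{2}$ against $|\nu\cdot\theta|$, yielding $\|E_{-}g\|_{\Wss}\lesssim\|g\|_{\Ts_{-}}$, the exponential factor ensuring integrability in the arclength variable. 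This proves $\gamma_{-}$ is onto, and the mirror construction $E_{+}$ (transporting outflow data backward along $\theta$, solving $\theta\cdot\nabla u-l^{-1}u=0$) proves $\gamma_{+}$ is onto. For the full trace I would decompose $g\in\Ts=\Ts_{-}\oplus\Ts_{+}$ and combine $E_{-}g_{-}$ and $E_{+}g_{+}$, inserting a smooth cutoff in the arclength variable so that each lifted piece vanishes on the opposite boundary component; then $\gamma(E_{-}g_{-}+E_{+}g_{+})=g$.

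\emph{The main obstacle.} The crux is the \emph{glancing} region $\nu\cdot\theta\approx 0$, where chords become arbitrarily short, the decay factor $e^{-\tau/l}$ no longer helps, and the cutoff's derivative threatens the control of $\theta\cdot\nabla u$. Making the change of variables rigorous up to this degenerate set, and verifying that the $|\nu\cdot\theta|$ weighting built into $\Ts$ renders all the resulting boundary integrals finite, is the only genuine analytic difficulty; it is precisely here that the smoothness and convexity of $\partial\Omega$ enter, and it is carried out in the references cited after the statement.
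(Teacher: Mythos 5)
Your first two steps are correct and are exactly the route the cited references take (the paper itself gives no proof of this lemma, deferring to Cessenat, Agoshkov, Egger--Schlottbom et al.): boundedness of $\gamma$ is free because the $\Ts$-norm of the trace is, by construction, one of the three summands of the $\Wss$-norm; and surjectivity of the \emph{partial} traces $\gamma_{\pm}$ follows from your lifting along chords, since the change of variables $dx=|\nu\cdot\theta|\,d\sigma\,ds$ and the fact that the entry-to-exit flow map $\Phi$ preserves the measure $|\nu\cdot\theta|\,d\sigma\,dS$ give $\|E_{-}g\|_{\Wss}\le C\|g\|_{\Ts_{-}}$, after which density of data supported away from the glancing set finishes the argument. (On a bounded domain the damping factor is not even needed; constant extension along chords works too.)

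The genuine gap is in your last step, the joint surjectivity of $\gamma$ onto $\Ts$, and it is not the deferrable technicality you describe. Your cutoff must equal $1$ at the entry point and vanish at the exit point of \emph{every} chord, but near glancing the chord length $\ell\to 0$: a cutoff at a fixed arclength scale fails to vanish at the far end of short chords, while a cutoff in relative arclength, $\chi(\tau/\ell)$, has $|\theta\cdot\nabla\chi|\sim 1/\ell$, so that
\begin{equation*}
\|\theta\cdot\nabla(\chi\, E_{-}g_{-})\|_{\Ws}^{2}\;\sim\;\int_{\Gin}\frac{|g_{-}|^{2}}{\ell}\,|\nu\cdot\theta|\,d\sigma\,dS ,
\end{equation*}
and for a smooth strictly convex domain one has $\ell\simeq c\,|\nu\cdot\theta|$ near glancing (for the unit disk, $\ell=2|\nu\cdot\theta|$ exactly), so this is comparable to the \emph{unweighted} $L^{2}$ norm of $g_{-}$, which is not controlled by $\|g_{-}\|_{\Ts_{-}}$. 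Moreover, no cleverer gluing can fix this, because the obstruction is intrinsic: for $u\in C^{1}(\clo{\Omega}\times\Sph)$ the fundamental theorem of calculus along chords plus Cauchy--Schwarz gives
\begin{equation*}
\int_{\Gin}\frac{\bigl|\gamma_{+}u\circ\Phi-\gamma_{-}u\bigr|^{2}}{\ell}\,|\nu\cdot\theta|\,d\sigma\,dS\;\le\;\|\theta\cdot\nabla u\|_{\Ws}^{2},
\end{equation*}
and since the left-hand weight dominates $|\nu\cdot\theta|/l$, this inequality passes to limits of Cauchy sequences and therefore holds for every $u\in\Wss$. Hence any pair $(g_{-},g_{+})$ in the range of $\gamma$ must satisfy this compatibility condition near glancing. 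It can be violated inside $\Ts$: on the unit disk take $g_{-}=0$ and $g_{+}=|\nu\cdot\theta|^{-1/2}$ localized near the glancing set; then $\|g_{+}\|_{\Ts_{+}}<\infty$ but $\int \ell^{-1}|g_{+}|^{2}|\nu\cdot\theta|=\tfrac12\int|\nu\cdot\theta|^{-1}=\infty$. So your construction (and any construction) proves boundedness of $\gamma$ and surjectivity of $\gamma_{\pm}$ separately, but not joint surjectivity onto $\Ts_{-}\oplus\Ts_{+}$; the latter fails for the disk. Fortunately, what the paper actually invokes later is only the surjectivity of $\gamma_{-}$ (to lift prescribed inflow data), which is precisely the part your lifting does establish.
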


In addition, we have the following definition for traceless closed subspaces of $\Wss$.
\begin{eqnarray}
\Wss_{\pm} = \nullsp{\gamma_{\pm}} = \left\{ v \in \Wss \, : \, \gamma_{\pm} v = 0 \right\}. \label{Eqn.003}
\end{eqnarray}

One of the most important tools in our analysis is the following integration-by-parts formula or Green's identity. For functions $u,v \in \Wss$ we have
\begin{eqnarray*}
\int_{\Omega \times \Sph} \!\!\!\! (\theta \cdot \nabla u) v  = \int_{\partial \Omega \times \Sph} \!\!\!\! (\theta \cdot \nu) u v  - \int_{\Omega \times \Sph} \!\!\!\! (\theta \cdot \nabla v) u, 
\end{eqnarray*}
and in particular if we let $v = u \in \Wss_{-}$ and multiply by $l$, we obtain from Young's inequality that
\begin{eqnarray}
\frac{l^2}{2} \|(\theta \cdot \nabla) u \|^{2}_{\Ws} + \frac{1}{2} \| u \|^{2}_{\Ws} \geq  \int_{\Omega \times \Sph} \!\!\!\! l (\theta \cdot \nabla u) u  = \frac{l}{2} \int_{\partial \Omega \times \Sph} \!\!\!\! (\theta \cdot \nu) u^2 = \frac{1}{2} \| u \|_{\Ts}^{2}. \nonumber
\end{eqnarray}
Recalling the definition of the norm in $\Wss$ given in (\ref{Eqn.148}), we obtain the following inequality for all $u \in \Wss_{-}$,
\begin{eqnarray}
2 \left( l^2 \|(\theta \cdot \nabla) u \|^{2}_{\Ws} + \| u \|^{2}_{\Ws} \right) \geq \| u \|_{\Wss}^2 . \label{Eqn.092}
\end{eqnarray}

For simplicity, we only pose and analyze the transport problem with vanishing incoming flow. In other words, we shall work in the space $\Wss_{-}$. When a prescribed incoming flow needs to be included, it is easy to lift it as right-hand side source using the surjectivity of the trace operator $\gamma_{-}$ (see lemma \ref{Lemma.101}). The transient radiative transport problem for general heterogeneous, scattering media is the following.

\begin{definition}[Direct Problem] \label{Def.DirectProb}
Given initial condition $u_{0} \in \Wss_{-}$ and forcing term $f \in C^{1}([0,\infty);\Ws) \cup C([0,\infty);\Wss)$, find a solution $u \in C^{1}( [0,\infty); \Ws) \cap C( [0,\infty); \Wss_{-})$ to the following initial boundary value problem
\begin{eqnarray}
\frac{1}{c} \frac{\partial u}{\partial t} + (\theta \cdot \nabla) u + \mu_{\rm a} u + \mu_{\rm s} (I - \Kop) u = \frac{1}{l} f \quad &\text{in $(0,\infty) \times (\Omega \times \Sph) $}, \label{Eqn.005} \\
u = u_{0} \quad &\text{on $\{ t = 0 \} \times (\Omega \times \Sph)$}. \label{Eqn.006}
\end{eqnarray}
\end{definition}

We have included the factor $1/l$ on the right-hand side of (\ref{Eqn.005}) so that both $u$ and $f$ have the same physical units. Here again $\nabla$ denotes the gradient with respect to the spatial variable $x \in \Omega$. Here $\mu_{\rm a}$ and $\mu_{\rm s}$ are the absorption and scattering coefficients, respectively.  The scattering operator $\Kop : \Ws \to \Ws$ is given by
\begin{eqnarray}
 (\Kop u)(x,\theta) = \int_{\Sph} \kappa (x,\theta,\theta') u(x,\theta') \, d S(\theta'), \label{Eqn.009}
\end{eqnarray}
where $\kappa$ is known as the scattering kernel. Throughout the paper we will make the following assumptions concerning the regularity of the absorption and scattering coefficients, and the scattering kernel. 

\begin{assumption} \label{Assump.101}
The scattering coefficient $0 \leq \mu_{\rm s} \in L^{\infty}(\Omega)$. So there exists a positive constant $\clo{\mu}_{\rm s}$ such that $0 \leq \mu_{\rm s}(x) \leq \clo{\mu}_{\rm s}$ for a.a. $x \in \Omega$. Similarly, the absorption coefficient $0 \leq \mu_{\rm a} \in L^{\infty}(\Omega)$, with a constant $\clo{\mu}_{\rm a}$ such that $0 \leq \mu_{\rm a}(x) \leq \clo{\mu}_{\rm a}$ for a.a. $x \in \Omega$.
\end{assumption}

We consider a bounded and conservative scattering operator obtained by making the following assumption concerning the scattering kernel.

\begin{assumption} \label{Assump.105}
The scattering kernel $0 \leq \kappa \in L^{2}(\Omega \times \Sph \times \Sph)$. It is also assumed that the scattering operator is conservative in the following sense,
\begin{eqnarray}
&& \int_{\Sph} \kappa(x,\theta,\theta') d S(\theta') = 1, \quad \text{for a.a. $(x,\theta) \in \Omega \times \Sph$}. \label{Eqn.011}
\end{eqnarray}
In addition, we assume a reciprocity condition on the scattering kernel given by
\begin{eqnarray}
&& \kappa(x,\theta,\theta') = \kappa(x,-\theta' , -\theta ), \quad \text{for a.a. $(x,\theta,\theta') \in \Omega \times \Sph \times \Sph$}. \label{Eqn.013}
\end{eqnarray}
This means that the scattering events are reversible in a local sense at each point $x \in \Omega$.
\end{assumption}

Now we turn our attention to a problem referred to as \textit{reversed transport}. This problem will be used later in the analysis of the inverse problem. It is formally obtained by reversing both time $t$ and direction $\theta$ in the original transport equation (\ref{Eqn.005}), and by employing the reciprocity relation (\ref{Eqn.013}) to obtain the adjoint scattering operator $\Kop^{*}$. This is expressed mathematically in (\ref{Eqn.999}). The reversed transport problem is defined as follows.

\begin{definition}[Reversed Problem] \label{Def.ReverseProb}
Given initial condition $\psi_{0} \in \Wss_{-}$ and forcing term $\rho \in  C^{1}([0,\infty);\Ws) \cup C([0,\infty);\Wss)$, find a solution $\psi \in C^{1}( [0,\infty); \Ws) \cap C( [0,\infty); \Wss_{-})$ to the following initial boundary value problem
\begin{eqnarray}
\frac{1}{c} \frac{\partial \psi}{\partial t} + (\theta \cdot \nabla) \psi - \mu_{\rm a} \psi - \mu_{\rm s} (I - \Kop^{*}) \psi = \frac{1}{l}\rho \quad &\text{in $(0,\infty) \times (\Omega \times \Sph)$}, \label{Eqn.017} \\
\psi = \psi_{0} \quad &\text{on $\{ t = 0 \} \times (\Omega \times \Sph)$}, \label{Eqn.019}
\end{eqnarray}
where the adjoint scattering operator $\Kop^{*} : \Ws \to \Ws$ is given by
\begin{eqnarray}
(\Kop^{*} \psi)(x,\theta) = \int_{\Sph} \kappa(x,\theta',\theta) \psi(x,\theta') \, d S(\theta'). \label{Eqn.020}
\end{eqnarray}
\end{definition}

Before presenting the main results of this paper, we wish to stress the importance of the reciprocity relation (\ref{Eqn.013}). This relation is derived from physical principles of scattering theory usually leading to rotationally invariant kernels of the form $\kappa = \kappa(x,\theta \cdot \theta')$ which satisfy reciprocity. This relation plays a subtle but important role in this paper. It implies that the time reversed equation (\ref{Eqn.017}) (but not boundary or initial conditions) coincides with the so-called \textit{adjoint equation}. As a consequence, we obtain a simple but powerful relationship, expressed mathematically in (\ref{Eqn.1000}), which leads to the equivalence between the solvability of the inverse and control problems. See the proof of theorem \ref{Thm.MainControl} found in Section \ref{Section:ControlProblem}.

% -----------------------------------------

\subsection{Main result for the inverse problem} \label{Subsec:MainResultInv}

Now we state the inverse problem for transient transport along with its unique solvability and stability under the assumption that the scattering coefficient is relatively small. Our proof, presented in Section \ref{Section:InverseProblem}, is based on a time reversal method inspired by the work of Stefanov and Uhlmann \cite{Ste-Uhl-2009-01}. Our main goal is to provide a constructive proof that the initial state of the transport field can be uniquely reconstructed from time-resolved boundary measurements. For this inverse source problem, we assume that the properties of the medium are known.

Let $u \in C^{1}( [0,\tau]; \Ws) \cap C( [0,\tau]; \Wss_{-})$ solve the direct transport problem \ref{Def.DirectProb} for unknown initial condition $u_{0} \in \Wss_{-}$. The outflowing boundary measurements are modeled by the operator $\Lambda : \Wss_{-} \to C( [0,\tau]; \Ts_{+})$ defined as
\begin{eqnarray}
(\Lambda u_{0})(t) = \gamma_{+} u(t), \qquad t \in [0,\tau], \label{Eqn.630}
\end{eqnarray}
where $\gamma_{+} : \Wss \to \Ts_{+}$ is the out-flowing trace operator defined in lemma \ref{Lemma.101}. With this notation we define the inverse problem as follows.

\begin{definition}[Inverse Problem] \label{Def.InvProb}
Let $u$ be the solution to the direct problem \ref{Def.DirectProb} for some unknown initial condition $u_{0}$ and forcing term $f = 0$. The inverse source problem is, given the out-flowing measurement $\Lambda u_{0}$, find the initial state $u_{0}$.
\end{definition}

Our main result concerning this inverse problem is the following.

\begin{theorem} \label{Thm.MainInv}
Assume that $l \overline{\mu}_{\rm s} \, e^{l ( \overline{\mu}_{\rm a} + \overline{\mu}_{\rm s} ) }   < e^{-1}$. Then there exists a time $\tau < \infty$ such that the out-flowing boundary measurement $\Lambda u_{0} \in C( [0,\tau]; \Ts_{+})$ determines the initial condition $u_{0} \in \Wss_{-}$ uniquely. Moreover, the following stability estimate
\begin{eqnarray*}
\| u_{0} \|_{\Wss} \leq C \| \Lambda u_{0} \|_{C( [0,\tau]; \Ts_{+})} 
\end{eqnarray*}
holds for some positive constant $C = C(\overline{\mu}_{a}, \overline{\mu}_{s}, l,\tau)$.
\end{theorem}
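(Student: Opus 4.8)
The plan is to realize the measurement-to-reconstruction process as a perturbation of the identity and then invert it by a Neumann series. Concretely, I would define a time reversal reconstruction operator $A : C([0,\tau];\Ts_{+}) \to \Wss_{-}$ as follows: given boundary data $m = \Lambda u_{0}$, I feed the time-and-direction-reversed data $(t,x,\theta) \mapsto m(\tau - t, x, -\theta)$ as the incoming flow of the reversed problem of Definition \ref{Def.ReverseProb}, lifted to a source $\rho$ through the surjectivity of $\gamma_{-}$ (Lemma \ref{Lemma.101}), and I solve that problem with \emph{zero} initial condition $\psi_{0} = 0$. Reading off the reversed solution at its terminal time and undoing the direction flip, i.e. $(Am)(x,\theta) := \psi(\tau, x, -\theta)$, produces a candidate reconstruction of $u_{0}$. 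The computation recorded in the paragraph preceding Definition \ref{Def.ReverseProb} shows that $w(t,x,\theta) := u(\tau - t, x, -\theta)$ solves \emph{exactly} the reversed equation with this same incoming flow; the only discrepancy between $w$ and the function built by $A$ is the initial data $w(0,x,\theta) = u(\tau, x, -\theta)$, which $A$ replaces by zero. Hence I expect the clean identity $A\Lambda = I - K$ on $\Wss_{-}$, where $K u_{0}$ is the reversed evolution (with zero incoming flow) of the reflected terminal field $u(\tau)$.

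The crucial point is that $K$ is controlled entirely by scattering. First I would record the well-posedness and energy estimates for the direct and reversed problems, built on the Green's identity and inequality (\ref{Eqn.092}) together with a Gr\"onwall argument; these yield the exponential attenuation/growth factor $e^{l(\overline{\mu}_{\rm a} + \overline{\mu}_{\rm s})}$ associated with one crossing time $T = l/c$. Next I would use the convexity of $\Omega$: for pure attenuated streaming (the operator without the gain term $\mu_{\rm s}\Kop$) every characteristic exits $\partial\Omega$ after travelling at most a distance $l$, so for any $\tau \geq T$ the ballistic part of $u(\tau)$ vanishes identically inside $\Omega$. Writing the full solution by Duhamel's principle with $\mu_{\rm s}\Kop u$ playing the role of a source then shows that the terminal field $u(\tau)$, and therefore $K u_{0}$, is produced solely by the scattering gain term. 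Estimating the accumulated source in the $\Wss$ norm and propagating it through the reversed evolution should give a bound of the form $\|K u_{0}\|_{\Wss} \leq c_{0}\, l \overline{\mu}_{\rm s}\, e^{l(\overline{\mu}_{\rm a} + \overline{\mu}_{\rm s})} \|u_{0}\|_{\Wss}$, in which $l\overline{\mu}_{\rm s}$ is the scattering optical depth and $e^{l(\overline{\mu}_{\rm a}+\overline{\mu}_{\rm s})}$ arises from the sign-reversed absorption and scattering in the reversed problem.

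With this estimate in hand, the hypothesis $l\overline{\mu}_{\rm s}\, e^{l(\overline{\mu}_{\rm a}+\overline{\mu}_{\rm s})} < e^{-1}$ is exactly what forces $\|K\|_{\Wss \to \Wss} < 1$, the constant $e^{-1}$ absorbing the geometric constant $c_{0}$ from the energy estimates. Then $A\Lambda = I - K$ is boundedly invertible on $\Wss_{-}$ with inverse given by the convergent Neumann series $\sum_{j \geq 0} K^{j}$. This immediately yields injectivity of $\Lambda$, hence uniqueness, together with the explicit reconstruction $u_{0} = \sum_{j\geq 0} K^{j}(A\,\Lambda u_{0})$, which is the advertised convergent iterative scheme. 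The stability estimate then follows from $\|u_{0}\|_{\Wss} \leq (1 - \|K\|)^{-1}\|A\|\,\|\Lambda u_{0}\|_{C([0,\tau];\Ts_{+})}$, identifying the constant $C = C(\overline{\mu}_{\rm a},\overline{\mu}_{\rm s},l,\tau)$. I expect the main obstacle to be the sharp energy estimate of the previous paragraph: it must be carried out in the full graph norm of $\Wss$, which controls the streaming derivative $\theta\cdot\nabla$ and the boundary trace and not merely the $\Ws = L^{2}$ norm, and it must track the scattering source through both the direct and the reversed evolutions with the precise exponential constant, since any slack there would weaken the smallness threshold below the stated $e^{-1}$.
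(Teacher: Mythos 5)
Your skeleton (time-and-direction reverse the measured data, feed it as inflow to the reversed problem of Definition \ref{Def.ReverseProb}, obtain $A\Lambda = I-K$ with $K$ essentially equal to $V R(\tau) V S(\tau)$, and invert by a Neumann series) is indeed the paper's skeleton. But your choice of \emph{zero} initial condition $\psi_{0}=0$ for the reversed evolution opens a genuine gap for the statement as posed, which lives in the $\Wss$-norm. With $\psi_{0}=0$, the initial datum and the inflow datum $(U\Lambda u_{0})(0)$ are incompatible at $t=0$ (the measured outflow trace of $u$ reversed in time does not vanish at $t=0$ in general), so the reversed problem only admits a mild solution in $C([0,\tau];\Ws)$, not a solution in $C^{1}([0,\tau];\Ws)\cap C([0,\tau];\Wss_{-})$. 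Consequently your operator $A$ does not land in $\Wss_{-}$ through the strong theory, and the key estimate $\|K u_{0}\|_{\Wss}\leq \cdots$ cannot even be formulated within the semigroup framework. The paper's resolution is precisely the idea your proposal is missing: replace $0$ by the solution $\psi$ of the reversed \emph{stationary} problem \ref{Def.SteadyReverseProb} with inflow datum $(U\Lambda u_{0})(0)$, i.e.\ (\ref{Eqn.701})--(\ref{Eqn.702}). This $\psi$ does two jobs at once: it conforms to the measured data at $t=0$, so that the error field has initial datum $(Uu)(0)-\psi\in\Wss_{-}$ and the strong $\Wss$-framework applies; and it lies in the null space of the generator $B$, so the error at the final time can be written as $R(\tau)B(Uu)(0)$ and bounded, via the stationary inf-sup estimates (theorem \ref{Thm.550}, corollary \ref{Cor.001}), by a fixed constant times $\|R(\tau)\|_{\Ws}\|S(\tau)\|_{\Ws}\|u_{0}\|_{\Wss}$. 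What your construction with $\psi_{0}=0$ actually proves is the paper's weaker $\Ws$-version, theorem \ref{Thm.MainInv2}, not theorem \ref{Thm.MainInv}.

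A second, related discrepancy concerns where the threshold $e^{-1}$ enters. You expect $\|K\|_{\Wss\to\Wss}<1$ already at a fixed $\tau\geq T$, via a one-crossing Duhamel bound $\|Ku_{0}\|_{\Wss}\leq c_{0}\,l\overline{\mu}_{\rm s}\,e^{l(\overline{\mu}_{\rm a}+\overline{\mu}_{\rm s})}\|u_{0}\|_{\Wss}$ with the constant $c_{0}$ ``absorbed'' by $e^{-1}$. In the $\Ws$-norm such an argument can plausibly be made to work (the ballistic semigroup vanishes after time $T$ and $S(t)$ is a contraction there), but in the $\Wss$-norm it is exactly the step you leave unproven, and the paper's own proof does \emph{not} obtain it: the paper's bound carries the factor $\left(1+l(\overline{\mu}_{\rm a}+2\overline{\mu}_{\rm s})\right)^{2}/(\alpha\beta)$, which the smallness hypothesis does not make small. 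Instead, the hypothesis $l\overline{\mu}_{\rm s}e^{l(\overline{\mu}_{\rm a}+\overline{\mu}_{\rm s})}<e^{-1}$ is used to make the growth rate of the perturbed semigroups negative (this is the computation of $E(\omega^{*})$ before proposition \ref{Prop.010}, and it is exactly where $e^{-1}$ comes from), so that $\|R(\tau)\|_{\Ws}\|S(\tau)\|_{\Ws}$ decays exponentially and the contraction is achieved by taking $\tau$ \emph{large}, not $\tau\approx T$. To complete your proof you would either have to carry out the $\Wss$ Duhamel estimate with an explicit constant $c_{0}\leq e$ --- delicate, since $\theta\cdot\nabla$ and the boundary trace must be propagated through both evolutions --- or adopt the paper's two devices: the stationary lift, and semigroup decay over a long measurement time.
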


This theorem is a consequence of theorem \ref{Thm.600} which is stated and proved in Section \ref{Section:InverseProblem}. Notice that theorem \ref{Thm.600} provides a convergent iterative method for the reconstruction of the unknown initial condition $u_{0}$. It is also worth mentioning that the stability estimate of theorem \ref{Thm.MainInv} is optimal for the chosen norms because we can easily show that $\| u_{0} \|_{\Wss}$ dominates $\| \Lambda u_{0} \|_{C( [0,\tau]; \Ts_{+})}$ up to a constant. In other words, in this stability relation we cannot relax the norm on the measured data $\Lambda u_{0}$ without relaxing the norm on the initial condition $u_{0}$ as well. In compliance with this argument, we can prove the validity of theorem \ref{Thm.MainInv} in a weaker or generalized setting, that is, for initial data $u_{0} \in \Ws$. The proof of the following theorem is presented in the remarks after the proof of theorem \ref{Thm.600} in Section Section \ref{Section:InverseProblem}.

\begin{theorem} \label{Thm.MainInv2}
If $l \overline{\mu}_{\rm s} \, e^{l ( \overline{\mu}_{\rm a} + \overline{\mu}_{\rm s} ) }   < e^{-1}$ then there exists $\tau < \infty$ such that
\begin{eqnarray*}
\| u_{0} \|_{\Ws} \leq C \| \Lambda u_{0} \|_{L^{2}( [0,\tau]; \Ts_{+})}, \qquad \text{for all $u_{0} \in \Ws$},
\end{eqnarray*}
for some positive constant $C = C(\overline{\mu}_{a}, \overline{\mu}_{s}, l,\tau)$.
\end{theorem}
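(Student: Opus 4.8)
The plan is to run the time-reversal reconstruction of Theorem~\ref{Thm.600} essentially verbatim, but to carry out every estimate in the energy space $\Ws = L^{2}(\Omega\times\Sph)$ rather than in $\Wss$, and to measure the data in the norm $L^{2}([0,\tau];\Ts_{+})$ that appears naturally once the transport energy identity is integrated over the observation window $[0,\tau]$. First I would define the reconstruction operator $\mathcal{R}$ built from the reversed problem of Definition~\ref{Def.ReverseProb}: inject the time-reversed measurement as incoming flux, solve (\ref{Eqn.017}) backward over $[0,\tau]$, and read off the state at $t=0$. The time $\tau$ is taken at least as large as the maximal transit time $T=l/c$, so that every unscattered particle has left the convex domain $\Omega$ within the window. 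Because the signs on $\mu_{\rm a}$ and $\mu_{\rm s}$ in the reversed equation are tuned, via reciprocity~(\ref{Eqn.013}), to undo attenuation, in a non-scattering medium $\mathcal{R}\Lambda$ would equal the identity on $\Ws$, so that the entire defect is the scattering contribution. The boundedness $\mathcal{R}:L^{2}([0,\tau];\Ts_{+})\to\Ws$ is then the $\Ws$-energy estimate for the reversed problem: test (\ref{Eqn.017}) against $\psi$, invoke the Green identity together with inequality~(\ref{Eqn.092}), and apply Gronwall over $[0,\tau]$ to bound $\|\psi(0)\|_{\Ws}$ by the $L^{2}([0,\tau];\Ts_{+})$ norm of the injected flux, with a constant depending only on $\overline{\mu}_{\rm a},\overline{\mu}_{\rm s},l,\tau$.

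The heart of the argument is the contraction estimate for the \emph{defect} operator $K = I-\mathcal{R}\Lambda$ on $\Ws$. Writing $Ku_{0}$ as the portion of the transport field that scatters at least once before exiting, I would bound its $\Ws$ norm by the product of the scattering budget $l\,\overline{\mu}_{\rm s}$ accumulated over one transit and the growth factor $e^{l(\overline{\mu}_{\rm a}+\overline{\mu}_{\rm s})}$ of the reversed ballistic-plus-absorbing flow; after accounting for the constant $e$ arising in the energy estimate, this yields $\|K\|_{\Ws\to\Ws}\le e\,l\,\overline{\mu}_{\rm s}\,e^{l(\overline{\mu}_{\rm a}+\overline{\mu}_{\rm s})}$. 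The hypothesis $l\,\overline{\mu}_{\rm s}\,e^{l(\overline{\mu}_{\rm a}+\overline{\mu}_{\rm s})}<e^{-1}$ is thus exactly what forces $\|K\|<1$, so $\mathcal{R}\Lambda=I-K$ is boundedly invertible by the same Neumann series used for Theorem~\ref{Thm.600}, giving the convergent reconstruction $u_{0}=\sum_{n\ge0}K^{n}\mathcal{R}\Lambda u_{0}$ and the bound
\begin{eqnarray*}
\|u_{0}\|_{\Ws}\le \frac{1}{1-\|K\|}\,\|\mathcal{R}\Lambda u_{0}\|_{\Ws}\le C\,\|\Lambda u_{0}\|_{L^{2}([0,\tau];\Ts_{+})},
\end{eqnarray*}
with $C=C(\overline{\mu}_{\rm a},\overline{\mu}_{\rm s},l,\tau)$.

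The step I expect to be the main obstacle is the passage from $\Wss_{-}$ to all of $\Ws$. Functions in $\Ws=L^{2}$ need not admit traces, so neither $\Lambda$ nor $K$ is a priori defined on $\Ws$; by Lemma~\ref{Lemma.101} these objects are only guaranteed to make sense on the dense subspace $\Wss_{-}\subset\Ws$. I would therefore establish all of the above for $u_{0}\in\Wss_{-}$, taking care that both the contraction constant $\|K\|$ and the stability constant $C$ depend only on $\overline{\mu}_{\rm a},\overline{\mu}_{\rm s},l,\tau$ and never on $\|u_{0}\|_{\Wss}$, and only afterward extend $\mathcal{R}\Lambda$ and the inequality to $\Ws$ by density and continuity. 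The delicate point is that the outgoing boundary term in the $\Ts_{+}$ norm must remain controlled as the interior regularity degenerates; this is secured by observing that the identity producing that boundary term is the purely $L^{2}$ Green identity behind~(\ref{Eqn.092}), whose constants are insensitive to the $\Wss$ norm and therefore survive the limit.
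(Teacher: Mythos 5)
Your proposal follows the same skeleton as the paper's own argument for this theorem: replace the stationary lift by a zero initial state in the reversed solve (your $\mathcal{R}$ is the paper's $G$ with $\psi\equiv 0$), work with mild solutions and $L^{2}$ boundary traces, show that the defect $K=I-\mathcal{R}\Lambda$ is a contraction on $\Ws$, and invert by a Neumann series; your density argument for extending $\Lambda$ and the inequality from $\Wss_{-}$ to $\Ws$ is a legitimate substitute for the paper's duality route (the paper gets boundedness of $\Lambda:\Ws\to L^{2}([0,\tau];\Ts_{+})$ from the identity $\Lambda^{*}=V\Upsilon U$ established in the proof of theorem \ref{Thm.MainControl}). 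The genuine divergence --- and the gap --- is the contraction estimate. The defect factors as $K=VR(\tau)VS(\tau)$ (as the paper notes in Section \ref{Section:BeyondWeak}), so $\|K\|_{\Ws}\le\|R(\tau)\|_{\Ws}\|S(\tau)\|_{\Ws}$, and the paper makes this $<1$ by taking $\tau$ \emph{sufficiently large} and invoking the exponential decay of proposition \ref{Prop.010}. At your proposed time $\tau\approx T$, the paper's estimates give only $\|R(\tau)\|_{\Ws}\|S(\tau)\|_{\Ws}\le\bigl(e\,e^{l(\overline{\mu}_{\rm a}+\overline{\mu}_{\rm s})}\bigr)^{2}>1$. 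Your claimed bound $\|K\|\le e\,l\overline{\mu}_{\rm s}\,e^{l(\overline{\mu}_{\rm a}+\overline{\mu}_{\rm s})}$ therefore does not follow from anything proved in the paper, and the accounting you give for it --- one scattering budget times one growth factor, as if $Ku_{0}$ were the singly scattered field --- is not a proof: the field remaining in $\Omega$ after one transit is \emph{multiply} scattered, and the reversed evolution rescatters it again, so two Gronwall-type corrections enter and must be shown to fit inside your constant $e$.

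The gap is fixable, and fixing it would actually sharpen the paper (an explicit, near-minimal measurement time $\tau>T$ instead of ``$\tau$ large enough''). The missing ingredient is a Duhamel argument exploiting lemma \ref{Lemma.140}(iv): writing $S(t)=S_{0}(t)+\int_{0}^{t}S_{0}(t-s)\,c\mu_{\rm s}\Kop\,S(s)\,ds$ and using $S_{0}(t)=0$ for $t>T$, Gronwall's inequality gives, for $\tau>T$ close to $T$ (up to factors tending to $1$ as $\tau\downarrow T$),
\begin{eqnarray*}
\|S(\tau)\|_{\Ws}\le l\overline{\mu}_{\rm s}\,e^{l\overline{\mu}_{\rm s}},
\qquad
\|R(\tau)\|_{\Ws}\le e^{l(\overline{\mu}_{\rm a}+\overline{\mu}_{\rm s})}\,
e^{\,l\overline{\mu}_{\rm s}e^{l(\overline{\mu}_{\rm a}+\overline{\mu}_{\rm s})}},
\end{eqnarray*}
so that $\|K\|\le l\overline{\mu}_{\rm s}e^{l(\overline{\mu}_{\rm a}+\overline{\mu}_{\rm s})}\,e^{\,l\overline{\mu}_{\rm s}+l\overline{\mu}_{\rm s}e^{l(\overline{\mu}_{\rm a}+\overline{\mu}_{\rm s})}}\le l\overline{\mu}_{\rm s}e^{l(\overline{\mu}_{\rm a}+\overline{\mu}_{\rm s})}\,e^{2/e}$, where the last step uses the hypothesis twice (it forces both $l\overline{\mu}_{\rm s}\le e^{-1}$ and $l\overline{\mu}_{\rm s}e^{l(\overline{\mu}_{\rm a}+\overline{\mu}_{\rm s})}\le e^{-1}$). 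Since $e^{2/e}<e$, this recovers exactly your bound and is $<1$ under the hypothesis. So your constant is attainable, but only through this Duhamel/Gronwall step; the ``single transit'' bookkeeping must be replaced by it, and the factor $e$ must be justified as absorbing the two rescattering corrections --- whose smallness itself requires the hypothesis --- rather than attributed vaguely to ``the energy estimate''.
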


%%%% -----------------------------------------

\subsection{Main result for the control problem} \label{Subsec:MainResultControl}

Here we proceed to define the control problem and state our main result concerning exact controllability of the transport field from control boundary data. Our proof, based on duality arguments, is presented in Section \ref{Section:ControlProblem}.

For the controllability issue, we work within the framework of a Hilbert space for the control functions on the boundary (generalized traces) and the corresponding mild solutions of the radiative transport problem. For the existence of mild solutions in semigroup theory, see the standard references \cite{Eng-Nag-2000,Pazy-1983}. The treatment of generalized traces for mild solutions can be found in 
\cite[Section 2]{Kli-Yam-2007} or \cite[Section 14.4]{Mok-1997} which is based on Cessenat \cite{Ces-1984,Ces-1985}. See also Bardos \cite[pp. 205-208]{Bar-1970}. 

We consider the following transport problem with prescribed inflow data. Given $h \in L^2([0,\tau];\Ts_{-})$, find a mild solution $v \in  C([0,\tau];\Ws)$ of the following problem
\begin{eqnarray}
\frac{1}{c} \frac{\partial v}{\partial t} + (\theta \cdot \nabla) v + \mu_{\rm a} v + \mu_{\rm s} (I - \Kop) v = 0 \quad &\text{in $(0,\tau] \times (\Omega \times \Sph)$}, \label{Eqn.001c} \\
v = 0 \quad &\text{on $\{ t = 0 \} \times (\Omega \times \Sph)$}, \label{Eqn.002c} \\
v = h \quad &\text{on $[0,\tau] \times (\partial \Omega \times \Sph)_{-}$}. \label{Eqn.003c}
\end{eqnarray}
The inflowing boundary control is modeled by the bounded operator $\Upsilon : L^2([0,\tau];\Ts_{-}) \to \Ws$ defined as
\begin{eqnarray}
\Upsilon h = v(\tau), \label{Eqn.010c}
\end{eqnarray}
where $v$ is the mild solution of the problem (\ref{Eqn.001c})-(\ref{Eqn.003c}). With this notation, we define the control problem in precise terms as follows.

\begin{definition}[Exact Controllability] \label{Def.ControlProb}
Given a target state $v_{\star} \in \Ws$, find a finite steering time $\tau > 0$ and an inflow control condition $h \in L^2([0,\tau];\Ts_{-})$ such that $\Upsilon h = v_{\star}$.
\end{definition}

Our main result concerning this control problem is the following.

\begin{theorem} \label{Thm.MainControl}
Assume that $l \overline{\mu}_{\rm s} \, e^{l ( \overline{\mu}_{\rm a} + \overline{\mu}_{\rm s} ) }   < e^{-1}$. Then there exists a steering time $\tau < \infty$ such that for a given target state $v_{\star} \in \Ws$, there exists inflow control $h \in L^2([0,\tau];\Ts_{-})$ so that the mild solution $v \in C([0,\tau];\Ws)$ of the problem (\ref{Eqn.001c})-(\ref{Eqn.003c}) satisfies $v(\tau) = v_{\star}$.

The control boundary condition has the form $h = h_{\rm min} + g$ where $g \in \nullsp{\Upsilon}$ and $h_{\rm min} \in \nullsp{\Upsilon}^{\perp}$ is uniquely determined by $v_{\star}$ as the minimum-norm control satisfying,
\begin{eqnarray*}
\| h_{\rm min} \|_{L^{2}([0,\tau];\Ts_{-})} \leq C \| v_{\star} \|_{\Ws} 
\end{eqnarray*}
for some positive constant $C = C(\overline{\mu}_{a}, \overline{\mu}_{s}, l, \tau)$.
\end{theorem}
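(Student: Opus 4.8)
The plan is to prove exact controllability by reducing it, through a duality argument in the spirit of the Hilbert Uniqueness Method, to an observability inequality for the reversed transport problem, and then to recognize that this observability inequality is exactly the stability estimate of Theorem \ref{Thm.MainInv2}. Exact controllability in the sense of Definition \ref{Def.ControlProb} is precisely the surjectivity of the bounded operator $\Upsilon : L^2([0,\tau];\Ts_-) \to \Ws$. By the closed range theorem, $\Upsilon$ is onto the Hilbert space $\Ws$ if and only if its Hilbert-space adjoint $\Upsilon^* : \Ws \to L^2([0,\tau];\Ts_-)$ is bounded below, that is
\begin{eqnarray*}
\| \Upsilon^* z \|_{L^2([0,\tau];\Ts_-)} \geq c \, \| z \|_{\Ws} \qquad \text{for all } z \in \Ws,
\end{eqnarray*}
for some $c > 0$. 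Granting this, the orthogonal decomposition $L^2([0,\tau];\Ts_-) = \nullsp{\Upsilon} \oplus \nullsp{\Upsilon}^{\perp}$ with $\nullsp{\Upsilon}^{\perp} = \overline{\rangesp{\Upsilon^*}}$ gives the claimed form $h = h_{\rm min} + g$ with $g \in \nullsp{\Upsilon}$, where $h_{\rm min}$ is the unique preimage of $v_\star$ lying in $\nullsp{\Upsilon}^{\perp}$, produced explicitly as $h_{\rm min} = \Upsilon^* (\Upsilon \Upsilon^*)^{-1} v_\star$ (the operator $\Upsilon \Upsilon^*$ being coercive, hence invertible, precisely because $\Upsilon^*$ is bounded below). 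The estimate $\| h_{\rm min} \|_{L^2([0,\tau];\Ts_-)} \le C \| v_\star \|_{\Ws}$ with $C = 1/c$ then follows at once.

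The second step is to identify $\Upsilon^*$. I would pair the forward solution $v$ of (\ref{Eqn.001c})--(\ref{Eqn.003c}), carrying control $h$ and zero initial data, against a solution $\psi$ of the reversed problem of Definition \ref{Def.ReverseProb} with $\rho = 0$, and integrate over $(0,\tau) \times (\Omega \times \Sph)$. Integration by parts in time combined with the Green's identity in the spatial and angular variables makes all interior terms cancel, exactly because the reciprocity relation (\ref{Eqn.013}) renders $\Kop$ and $\Kop^*$ formal adjoints and makes the reversed equation (\ref{Eqn.017}) the adjoint of (\ref{Eqn.001c}) after the time--direction reversal $(t,\theta) \mapsto (\tau - t, -\theta)$. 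What survives is the relationship (\ref{Eqn.1000}): a terminal pairing of $\Upsilon h = v(\tau)$ against the reversed data of $\psi$ (the initial term dropping out since $v(0)=0$), equated to a boundary pairing of $h$ against the inflow trace $\gamma_- \psi$ over $[0,\tau]\times\Gin$. Arranging the data of $\psi$ so that its outflow trace vanishes eliminates the remaining boundary contribution, and the identity exhibits $\Upsilon^*$ as the reversal image of the trace-measurement operator $\Lambda$ of the reversed problem; note that under $\theta \mapsto -\theta$ the outflow trace space $\Ts_+$ of the reversed solution is identified with the inflow control space $\Ts_-$.

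The third step supplies the observability inequality. Since $\Kop^*$ is again conservative and its kernel inherits reciprocity from (\ref{Eqn.013}), the reversed problem satisfies Assumptions \ref{Assump.101} and \ref{Assump.105} with the same coefficient bounds $\overline{\mu}_{\rm a}, \overline{\mu}_{\rm s}$. Consequently the stability estimate of Theorem \ref{Thm.MainInv2}, valid under the hypothesis $l \overline{\mu}_{\rm s}\, e^{l(\overline{\mu}_{\rm a}+\overline{\mu}_{\rm s})} < e^{-1}$, applies to the reversed problem and reads verbatim as the lower bound $\|\Upsilon^* z\|_{L^2([0,\tau];\Ts_-)} \ge c\,\|z\|_\Ws$ with $c = 1/C$. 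This closes the argument: $\Upsilon^*$ is bounded below, $\rangesp{\Upsilon^*}$ is closed, $\Upsilon$ is surjective onto $\Ws$, and the minimum-norm control is recovered as above.

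I expect the main obstacle to be the rigorous derivation of the duality identity (\ref{Eqn.1000}) at the available level of regularity. The operator $\Upsilon$ is defined through mild (semigroup) solutions with $L^2$-in-time boundary data, whereas the Green's identity and Theorem \ref{Thm.MainInv2} live in the stronger $\Wss$-framework, so the surviving boundary integrals must be read in the generalized-trace sense of Cessenat. I would first establish the identity for smooth data, carefully tracking the inflow/outflow bookkeeping under $\theta \mapsto -\theta$ so that the traceless conditions defining $\Wss_-$ for $\psi$ match the vanishing of the outflow terms, and then extend it to $h \in L^2([0,\tau];\Ts_-)$ and $z \in \Ws$ by density together with the boundedness of $\Upsilon$, $\Upsilon^*$, and the trace maps of Lemma \ref{Lemma.101}.
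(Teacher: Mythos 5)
Your overall strategy coincides with the paper's: exact controllability is reduced, via the closed range theorem, to a lower bound for $\Upsilon^{*}$; the adjoint is identified through a duality pairing of the controlled forward solution against a solution of the reversed equation, producing the identity (\ref{Eqn.1000}); the lower bound is supplied by theorem \ref{Thm.MainInv2}; and the minimum-norm control is $h_{\rm min} = \Upsilon^{*}(\Upsilon\Upsilon^{*})^{-1}v_{\star}$ from pseudo-inverse theory. Your first, second and fourth steps are in substance exactly the paper's proof, including the approximation by smooth boundary data and passage to the limit using the boundedness of $\Upsilon$.

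Your third step, however, rests on a false premise. You assert that ``the reversed problem satisfies Assumptions \ref{Assump.101} and \ref{Assump.105} with the same coefficient bounds'' and on that basis apply theorem \ref{Thm.MainInv2} \emph{to the reversed problem}. The reversed equation (\ref{Eqn.017}) carries the coefficients with flipped signs, $-\mu_{\rm a}$ and $-\mu_{\rm s}(I-\Kop^{*})$, so it is an equation with gain rather than absorption; it is not an instance of the direct problem with admissible coefficients, since Assumption \ref{Assump.101} requires $\mu_{\rm a}, \mu_{\rm s} \geq 0$ (compare lemma \ref{Lemma.140}(iii), where the reversed ballistic semigroup may grow by the factor $e^{l(\overline{\mu}_{\rm a}+\overline{\mu}_{\rm s})}$ while the direct one is a contraction). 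Hence theorem \ref{Thm.MainInv2} cannot be invoked ``verbatim'' for the reversed flow. Fortunately the detour is unnecessary, and your own second step shows why: the test function with vanishing outflow trace, evolved backward in time under the reversed equation, is precisely $Uu$ for a forward solution $u$ of the \emph{direct} problem \ref{Def.DirectProb} with vanishing inflow trace, so the identity (\ref{Eqn.1000}) reads $\Upsilon^{*} = U \Lambda V$ with $\Lambda$ the measurement operator of the direct problem, not of the reversed one. Since $U$ and $V$ are isometries, $\| \Upsilon^{*} z \|_{L^{2}([0,\tau];\Ts_{-})} = \| \Lambda V z \|_{L^{2}([0,\tau];\Ts_{+})} \geq c \, \| V z \|_{\Ws} = c \, \| z \|_{\Ws}$ follows at once from theorem \ref{Thm.MainInv2} exactly as stated. (Alternatively, an analogue of theorem \ref{Thm.MainInv2} for the reversed flow does hold, but its proof requires swapping the roles of $S$ and $R$ in the argument of theorem \ref{Thm.600}, not the claim that the reversed problem satisfies the original assumptions.) With this correction, which is precisely the paper's argument, your proof closes.
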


%%%%%%%%%%%%%%%%%%%%%%%%%%%%%%%%%%%%%%%%%%%%%%%%%%
%%%%%%%%%%%% NEW SECTION %%%%%%%%%%%%%%%%%%%%%%%%% %%%%%%%%%%%%%%%%%%%%%%%%%%%%%%%%%%%%%%%%%%%%%%%%%

\section{Analysis of the direct problem} \label{Section:TheoryDirect}
 
In this section we briefly review the well-posedness of the direct problem \ref{Def.DirectProb} and the reversed problem \ref{Def.ReverseProb} in appropriate functional spaces. Most of the results in this section are well-known or easily derived from the literature for radiative transport equations. However, we explicitly state the key ideas in order to use them later in the analysis of the inverse and control problems. Our approach is mainly based on semigroup theory and its application to evolution PDEs \cite{Mok-1997,Eng-Nag-2000,Pazy-1983}. Besides reviewing some standard results, the purpose of this section is to specify conditions on the absorbing and scattering coefficients that make the semigroups associated with the two evolution problems decay exponentially fast. See proposition \ref{Prop.010} at the end of this section. It will become clear in the analysis of the inverse problem (see Section \ref{Section:InverseProblem}) that the exponential decay of these semigroups is essential for our proof of the main results of this paper. 

In order to accomplish our goal for this section, we treat the transport problems as bounded perturbations of the ballistic portion of the transport equation. Hence, we start by analyzing the ballistic (non-scattering) transport problems.

\begin{definition}[Ballistic Problems] \label{Def.BallisticProb}
For the direct problem, given initial condition $u_{0} \in \Wss_{-}$, find a solution $u \in C^{1}( [0,\infty); \Ws) \cap C( [0,\infty); \Wss_{-})$ to the following initial boundary value problem
\begin{eqnarray}
\frac{1}{c} \frac{\partial u}{\partial t} + (\theta \cdot \nabla) u +  (\mu_{\rm a} + \mu_{\rm s}) u = 0 \quad &\text{in $(0,\infty) \times (\Omega \times \Sph)$}, \label{Eqn.024} \\
u = u_{0} \quad &\text{on $\{ t = 0 \} \times (\Omega \times \Sph)$}. \label{Eqn.026}
\end{eqnarray}

Similarly, for the time-reversed ballistic problem, given initial condition $\psi_{0} \in \Wss_{-}$, find a solution $\psi \in C^{1}( [0,\infty); \Ws) \cap C( [0,\infty); \Wss_{-})$ to the following initial boundary value problem
\begin{eqnarray}
\frac{1}{c} \frac{\partial \psi}{\partial t} + (\theta \cdot \nabla) \psi - (\mu_{\rm a} + \mu_{s}) \psi = 0 \quad &\text{in $(0,\infty) \times (\Omega \times \Sph)$}, \label{Eqn.025} \\
\psi = \psi_{0} \quad &\text{on $\{ t = 0 \} \times (\Omega \times \Sph)$}. \label{Eqn.027}
\end{eqnarray}
\end{definition}

The ballistic problems are well-posed. In fact, solutions can be written explicitly using the method of characteristics. Since we plan to use the theory of semigroups, we pose these ballistic problems as abstract Cauchy problems in $\Wss_{-}$ with the following operators $A_{0}, B_{0} : \Wss_{-} \to \Ws$ given by
\begin{eqnarray}
A_{0} v = - c \left[ (\theta \cdot \nabla) v + (\mu_{\rm a} + \mu_{\rm s}) v \right],  \label{Eqn.116} \\
B_{0} v = - c \left[ (\theta \cdot \nabla) v - (\mu_{\rm a} + \mu_{\rm s}) v \right],  \label{Eqn.117}
\end{eqnarray}
as their respective generators. We have the following properties concerning the strongly continuous semigroups generated by $A_{0}$ and $B_{0}$.

\begin{lemma} \label{Lemma.140}
Let $\{ S_{0}(t) \}_{t \geq 0}$ and $\{ R_{0}(t) \}_{t \geq 0}$ be the $C_{0}$-semigroups generated by $A_{0}$ and $B_{0}$, respectively. Then the following properties are satisfied.
\begin{itemize}
\item[(i)] Let $v \in \Wss_{-}$ be fixed, then the maps $t \mapsto S_{0}(t)v$ and $t \mapsto R_{0}(t)v$ are continuous from $\mathbb{R}_{+}$ to $\Wss_{-}$ and continuously differentiable from $\mathbb{R}_{+}$ to $\Ws$.
\item[(ii)] For each $t \geq 0$, both $S_{0}(t)$ and $R_{0}(t)$ extend as bounded operators from $\Ws$ to $\Ws$.
\item[(iii)] For all $t \geq 0$, we have $\| S_{0}(t) \| \leq 1$ and $\| R_{0}(t)\| \leq e^{l ( \overline{\mu}_{\rm a} + \overline{\mu}_{\rm s} ) }$.
\item[(iv)] For all $t > T$ we have $S_{0}(t) = R_{0}(t) = 0$.
\item[(v)] For all $t \geq 0$ and all $\omega \leq 0$, we have $\| S_{0}(t) \| \leq e^{\omega (t-T)}$ and $\| R_{0}(t) \| \leq e^{\omega (t-T)} e^{l ( \overline{\mu}_{\rm a} + \overline{\mu}_{\rm s} ) }$.
\end{itemize}
\end{lemma}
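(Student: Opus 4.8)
The plan is to exploit the explicit solvability of the ballistic problems by the method of characteristics. Because (\ref{Eqn.024})--(\ref{Eqn.025}) contain no integral scattering term, the transport operator decouples along the characteristic lines $s \mapsto x - cs\theta$, and the solution is transported backward along these lines modulated by a purely multiplicative attenuation (respectively amplification) factor. Concretely, I would first record the representation
\begin{eqnarray*}
(S_{0}(t)v)(x,\theta) = v(x - ct\theta, \theta) \, \exp\!\left( -c\int_{0}^{t} (\mu_{\rm a} + \mu_{\rm s})(x - c(t-s)\theta)\, ds \right)\chi(x,\theta,t),
\end{eqnarray*}
where $\chi(x,\theta,t)$ is the characteristic function of the set on which the whole backward segment $\{ x - cs\theta : 0 \le s \le t \}$ remains in $\Omega$; the representation for $R_{0}(t)$ is identical with the sign of the exponent reversed. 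By convexity of $\Omega$, the segment lies in $\Omega$ precisely when its endpoint $x - ct\theta$ does, so $\chi$ is simply $\mathbf{1}_{\Omega}(x - ct\theta)$. Since $A_{0}, B_{0}$ generate the $C_{0}$-semigroups by hypothesis, uniqueness for the Cauchy problem on $D(A_{0}) = \Wss_{-}$ identifies $S_{0}(t)v$ and $R_{0}(t)v$ with these formulas, first on $\Wss_{-}$ and then, after the bounds below, on all of $\Ws$ by density.

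With the representation in hand, claims (ii) and (iii) follow from two observations. First, for each fixed $\theta$ the map $x \mapsto x - ct\theta$ is a measure-preserving translation, so integration in $x$ is unaffected by the shift. Second, the attenuation factor for $S_{0}$ never exceeds $1$, since its exponent is nonpositive (because $\mu_{\rm a},\mu_{\rm s}\ge 0$); this gives $\|S_{0}(t)v\|_{\Ws} \le \|v\|_{\Ws}$ directly, hence the extension (ii) together with $\|S_{0}(t)\|\le 1$. For $R_{0}$ the exponent is nonnegative, but on the support of the solution the path length $ct$ is a chord of $\Omega$ and so cannot exceed $l$; thus the exponent is at most $(\overline{\mu}_{\rm a}+\overline{\mu}_{\rm s})\,ct \le l(\overline{\mu}_{\rm a}+\overline{\mu}_{\rm s})$, yielding $\|R_{0}(t)\|\le e^{l(\overline{\mu}_{\rm a}+\overline{\mu}_{\rm s})}$.

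Claim (iv) is where the geometry of $\Omega$ enters decisively. For $t > T = l/c$ one has $ct > l = \mathrm{diam}(\Omega)$, so the two points $x$ and $x - ct\theta$ lie at distance strictly greater than the diameter and cannot both belong to $\Omega$; hence $\chi \equiv 0$ and $S_{0}(t) = R_{0}(t) = 0$. Claim (v) is then immediate from (iii) and (iv): for $\omega \le 0$ and $t \le T$ one has $e^{\omega(t-T)} \ge 1$, which absorbs the bounds of (iii), while for $t > T$ both operators vanish.

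For the regularity statement (i) I would appeal to the abstract semigroup framework rather than differentiate the explicit formula. For $v \in \Wss_{-} = D(A_{0})$, standard $C_{0}$-semigroup theory gives $S_{0}(t)v \in D(A_{0})$ for all $t$, with $t \mapsto S_{0}(t)v$ continuously differentiable into $\Ws$ and $\frac{d}{dt}S_{0}(t)v = A_{0} S_{0}(t)v$; the same holds for $R_{0}$ and $B_{0}$. Continuity into the \emph{stronger} space $\Wss_{-}$ then follows from the equivalence of the graph norm of $A_{0}$ with $\|\cdot\|_{\Wss}$ on $\Wss_{-}$: boundedness of multiplication by $\mu_{\rm a}+\mu_{\rm s}$ controls $\|\theta\cdot\nabla v\|_{\Ws}$ in terms of $\|A_{0} v\|_{\Ws}$ and $\|v\|_{\Ws}$, and the Green's-identity bound (\ref{Eqn.092}) converts this into control of $\|v\|_{\Wss}$. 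I expect this last point---upgrading continuity from $\Ws$ to $\Wss_{-}$ through the graph-norm equivalence, and reconciling the pointwise characteristic representation with the merely $L^{\infty}$ coefficients---to be the most delicate part, whereas the quantitative bounds (iii)--(v) reduce to the elementary estimate on the attenuation factor together with the diameter bound.
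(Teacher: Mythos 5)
Your proposal is correct and follows essentially the same route as the paper's proof: the paper likewise invokes the method of characteristics for the explicit solution formulas (giving (i)--(iii) together with standard $C_{0}$-semigroup theory), deduces (iv) from the fact that with vanishing inflow the support exits $\Omega$ for $t > T$, and obtains (v) by combining (iii) and (iv). The paper states these steps only in outline, so your write-up simply supplies the details---the explicit attenuation factor, the convexity/diameter argument, and the graph-norm equivalence for continuity into $\Wss_{-}$---that the paper leaves to the cited references.
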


\begin{proof}
The proof of properties (i)-(iii) follow from the standard theory of semigroups \cite{Eng-Nag-2000,Pazy-1983} and the fact that the solutions of the Cauchy problems can be explicitly expressed using the method of characteristics. Similarly, property (iv) holds because we impose vanishing inflow condition on the boundary of $\Omega$ by working in the space $\Wss_{-}$. Thus, for time $t > T$ the support of solution $u=u(t)$ or $\psi=\psi(t)$ has already left the domain $\Omega$. Finally, the combination of (iii)-(iv) yields the estimates in (v) as desired.
\end{proof}

Now we turn to the full problem in heterogeneous, absorbing, scattering media. We view both, the direct problem \ref{Def.DirectProb} and the reversed problem \ref{Def.ReverseProb} as perturbations of the ballistic problems \ref{Def.BallisticProb}. So we pose the former two problems as abstract Cauchy problems in $\Wss_{-}$.

The direct problem \ref{Def.DirectProb} corresponds to
\begin{eqnarray}
\dot{u}(t) &= A u(t) + \frac{1}{T} \, f(t) \quad \text{for $t > 0$}, \label{Eqn.122} \\
u(0) &= u_{0},   \label{Eqn.124}
\end{eqnarray}
where $T = l / c$ and the operator $A : \Wss_{-} \subset \Ws \to \Ws$ is given by 
\begin{eqnarray}
A v = - c \left[ (\theta \cdot \nabla) v + \mu_{\rm a} v + \mu_{\rm s} (I-\Kop) v \right].  \label{Eqn.126}
\end{eqnarray}

Similarly, the reversed problem \ref{Def.ReverseProb} corresponds to
\begin{eqnarray}
\dot{\psi}(t) &= B \psi(t) + \frac{1}{T} \, \rho(t) \quad \text{for $t > 0$}, \label{Eqn.132} \\
\psi(0) &= \psi_{0},   \label{Eqn.134}
\end{eqnarray}
where the operator $B : \Wss_{-} \subset \Ws \to \Ws$ is given by 
\begin{eqnarray}
B v = - c \left[ (\theta \cdot \nabla) v - \mu_{\rm a} v - \mu_{\rm s} (I - \Kop^*) v \right].  \label{Eqn.136}
\end{eqnarray}

In order to apply the theory of bounded perturbation of semigroups \cite[Ch. 3]{Eng-Nag-2000}, we find it convenient to state the following lemma, whose proof is a consequence of the Cauchy-Schwarz inequality and assumption \ref{Assump.105} on the scattering kernel $\kappa$.

\begin{lemma} \label{Lemma.145}
Let $\Kop, \Kop^{*}: \Ws \to \Ws$ be defined by (\ref{Eqn.009}) and (\ref{Eqn.020}), respectively. Then we have $\| \Kop \| = \| \Kop^{*} \| = 1$.
\end{lemma}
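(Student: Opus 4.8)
The plan is to establish the two bounds $\|\Kop\|\le 1$ and $\|\Kop\|\ge 1$ separately for the operator norm on $\Ws = L^2(\Omega\times\Sph)$, and then to obtain the statement for $\Kop^{*}$ by identifying it as the Hilbert-space adjoint of $\Kop$.

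First I would prove the upper bound. Fix $u\in\Ws$. Since $\kappa\ge 0$ by assumption \ref{Assump.105}, I factor $\kappa=\kappa^{1/2}\kappa^{1/2}$ and apply the Cauchy--Schwarz inequality in the $\theta'$ integral to get the pointwise estimate
\[
|(\Kop u)(x,\theta)|^{2}\le\left(\int_{\Sph}\kappa(x,\theta,\theta')\,dS(\theta')\right)\left(\int_{\Sph}\kappa(x,\theta,\theta')\,|u(x,\theta')|^{2}\,dS(\theta')\right).
\]
The first factor equals $1$ by the conservation identity (\ref{Eqn.011}), leaving a pointwise bound that I integrate over $\Omega\times\Sph$ and reorganize with Fubini (all integrands being nonnegative) to obtain
\[
\|\Kop u\|_{\Ws}^{2}\le\int_{\Omega}\int_{\Sph}|u(x,\theta')|^{2}\left(\int_{\Sph}\kappa(x,\theta,\theta')\,dS(\theta)\right)dS(\theta')\,dx.
\]

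The crux of the argument is the inner integral $\int_{\Sph}\kappa(x,\theta,\theta')\,dS(\theta)$, in which the integration is now over the \emph{first} angular argument, whereas the conservation identity (\ref{Eqn.011}) is stated for the second. This mismatch is exactly what the reciprocity relation (\ref{Eqn.013}) repairs: substituting $\kappa(x,\theta,\theta')=\kappa(x,-\theta',-\theta)$ and changing variables via the antipodal map $\theta\mapsto-\theta$ (an orthogonal transformation, hence preserving $\Sph$ and the surface measure $dS$) turns this into $\int_{\Sph}\kappa(x,-\theta',\eta)\,dS(\eta)$, which equals $1$ by (\ref{Eqn.011}) applied with first argument $-\theta'$. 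Thus $\|\Kop u\|_{\Ws}\le\|u\|_{\Ws}$, giving $\|\Kop\|\le 1$. I regard this interchange --- noticing that conservation is available in the second variable while Fubini demands it in the first, and that reciprocity closes the gap --- as the only genuinely nontrivial point; everything else is routine.

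For the lower bound I exhibit a normalized eigenfunction: any $u(x,\theta)=g(x)$ independent of $\theta$ with $g\in L^{2}(\Omega)$ satisfies $(\Kop u)(x,\theta)=g(x)\int_{\Sph}\kappa(x,\theta,\theta')\,dS(\theta')=g(x)$ by (\ref{Eqn.011}), so $\Kop u=u$ and hence $\|\Kop\|\ge 1$; combined with the upper bound this yields $\|\Kop\|=1$. Finally, a direct Fubini computation shows $\la\Kop u,w\ra_{\Ws}=\la u,\Kop^{*}w\ra_{\Ws}$ for all $u,w\in\Ws$, which identifies $\Kop^{*}$ defined by (\ref{Eqn.020}) as the adjoint of $\Kop$; since passing to the adjoint preserves the operator norm, $\|\Kop^{*}\|=\|\Kop\|=1$. (Equivalently, the same Cauchy--Schwarz argument applies verbatim to $\Kop^{*}$ with the roles of the two angular arguments, and of conservation versus reciprocity, interchanged.)
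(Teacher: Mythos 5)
Your proof is correct and follows exactly the route the paper indicates: the paper states that the lemma ``is a consequence of the Cauchy--Schwarz inequality and assumption \ref{Assump.105},'' and your argument is precisely that, using conservation (\ref{Eqn.011}) together with reciprocity (\ref{Eqn.013}) to control the integral of $\kappa$ in its first angular variable, plus the standard attainment of the norm by $\theta$-independent functions. Your write-up simply supplies the details (Schur-test-style estimate, isotropic eigenfunctions, adjoint identification) that the paper leaves implicit.
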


%\begin{proof}
%Let $u \in \Ws$ and consider,
%\begin{eqnarray*}
%\| \Kop u \|^{2}_{\Ws} & = \int_{\Omega} \int_{\Sph} \left( \int_{\Sph} \kappa(x,\theta, \theta') u(x,\theta') d \theta' \right)^2 d \theta d x \\
%& \leq \int_{\Omega} \int_{\Sph} \int_{\Sph} \kappa (x,\theta,\theta') d \theta' \, \int_{\Sph} \kappa (x,\theta,\theta') u(x,\theta')^2 d \theta' \, d \theta dx \\
%& = \int_{\Omega} \int_{\Sph} \int_{\Sph} \kappa (x,\theta,\theta') u(x,\theta')^2 d \theta' d \theta dx \\
%& = \int_{\Omega} \int_{\Sph} \int_{\Sph} \kappa (x,\theta,\theta') d \theta \, u(x,\theta')^2 d \theta' dx = \| u \|^{2}_{\Ws}.
%\end{eqnarray*}
%Here we have used the Cauchy-Schwarz inequality, Fubini's theorem and assumption \ref{Assump.105} on the scattering kernel $\kappa$. Now, notice that equality is attained for the function $u(x,\theta) \equiv 1$. Therefore, it follows that $\| \Kop \| = \| \Kop^{*} \| = 1$.
%\end{proof}

Now we may state and prove the well-posedness of the direct problem \ref{Def.DirectProb} and reversed problem \ref{Def.ReverseProb} in terms of the associated Cauchy problems (\ref{Eqn.124})-(\ref{Eqn.126}) and (\ref{Eqn.134})-(\ref{Eqn.136}), respectively.

\begin{theorem} \label{Thm.120}
The direct problem \ref{Def.DirectProb} has a unique solution given by the formula
\begin{eqnarray*}
u(t) = S(t)u_{0} + \frac{1}{T} \int_{0}^{t} S(t-s) f(s) ds,
\end{eqnarray*}
where $\{ S(t) \}_{t \geq 0}$ is the $C_{0}$-semigroup generated by $A$ satisfying properties (i)-(ii) of lemma \ref{Lemma.140} and such that $S(t) : \Ws \to \Ws$ is bounded with $\| S(t) \|_{\Ws} \leq  M_{\omega} e^{(\omega + M_{\omega} c \overline{\mu}_{\rm s} )t}$ for all $t \geq 0$ and all $\omega \leq 0$ where $M_{\omega} = e^{- \omega T}$.

Similarly, the reversed problem \ref{Def.ReverseProb} has a unique solution given by the formula
\begin{eqnarray*}
\psi(t) = R(t)\psi_{0} + \frac{1}{T} \int_{0}^{t} R(t-s) \rho(s) ds,
\end{eqnarray*}
where $\{ R(t) \}_{t \geq 0}$ is the $C_{0}$-semigroup generated by $B$ satisfying properties (i)-(ii) of lemma \ref{Lemma.140} and such that $R(t) : \Ws \to \Ws$ is bounded with $\| R(t) \|_{\Ws} \leq N_{\omega} e^{(\omega + N_{\omega} c \overline{\mu}_{\rm s} )t}$ for all $t \geq 0$ and all $\omega \leq 0$ where $N_{\omega} = e^{- \omega T}  e^{l ( \overline{\mu}_{\rm a} + \overline{\mu}_{\rm s} ) }$.
\end{theorem}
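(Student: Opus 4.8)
The plan is to treat both $A$ and $B$ as bounded perturbations of the ballistic generators $A_0$ and $B_0$, and then to invoke the bounded perturbation theorem for $C_0$-semigroups together with the explicit estimates of Lemma \ref{Lemma.140}. First I would record the algebraic decomposition. Comparing (\ref{Eqn.126}) with (\ref{Eqn.116}) gives $A = A_0 + P$ with $P = c\, \mu_{\rm s}\, \Kop$, and comparing (\ref{Eqn.136}) with (\ref{Eqn.117}) gives $B = B_0 + Q$ with $Q = -\,c\, \mu_{\rm s}\, \Kop^{*}$. By Assumption \ref{Assump.101} we have $\mu_{\rm s} \leq \overline{\mu}_{\rm s}$ a.e., and by Lemma \ref{Lemma.145} we have $\|\Kop\| = \|\Kop^{*}\| = 1$ on $\Ws$, so $P$ and $Q$ are bounded operators on $\Ws$ with $\|P\|,\|Q\| \leq c\,\overline{\mu}_{\rm s}$. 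In particular $A$ and $B$ share the domain $\Wss_{-}$ with $A_0$ and $B_0$.

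Second, I would rewrite the decay estimate (v) of Lemma \ref{Lemma.140} in the growth form required by the perturbation theorem: for each fixed $\omega \leq 0$,
\[
\|S_0(t)\| \leq e^{\omega(t-T)} = M_\omega\, e^{\omega t}, \qquad \|R_0(t)\| \leq e^{\omega(t-T)}\, e^{l(\overline{\mu}_{\rm a}+\overline{\mu}_{\rm s})} = N_\omega\, e^{\omega t},
\]
with $M_\omega = e^{-\omega T}$ and $N_\omega = e^{-\omega T}\,e^{l(\overline{\mu}_{\rm a}+\overline{\mu}_{\rm s})}$ exactly as in the statement. The bounded perturbation theorem \cite[Ch. 3]{Eng-Nag-2000} then asserts that $A = A_0 + P$ generates a $C_0$-semigroup $\{S(t)\}$ with $\|S(t)\| \leq M_\omega\, e^{(\omega + M_\omega \|P\|)t} \leq M_\omega\, e^{(\omega + M_\omega c\,\overline{\mu}_{\rm s})t}$, where the last inequality uses $M_\omega \geq 0$, $t \geq 0$ and $\|P\| \leq c\,\overline{\mu}_{\rm s}$. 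Likewise $B = B_0 + Q$ generates $\{R(t)\}$ with $\|R(t)\| \leq N_\omega\, e^{(\omega + N_\omega c\,\overline{\mu}_{\rm s})t}$. Carrying this out for every $\omega \leq 0$ yields the full family of bounds claimed.

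Third, I would verify that properties (i)--(ii) of Lemma \ref{Lemma.140} survive the perturbation. Property (i) is immediate, since $A$ and $A_0$ have the common domain $\Wss_{-}$, so for $v \in \Wss_{-}$ the orbit $t \mapsto S(t)v$ is continuous into $\Wss_{-}$ and continuously differentiable into $\Ws$. Property (ii), the extension of $S(t)$ to a bounded operator on $\Ws$, follows from the Dyson--Phillips series $S(t) = \sum_{n \geq 0} S_n(t)$ with $S_{n+1}(t) = \int_0^t S_0(t-s)\, P\, S_n(s)\, ds$: because $S_0(t)$ already extends to $\Ws$ by Lemma \ref{Lemma.140}(ii) and $P$ is bounded on $\Ws$, each term extends to $\Ws$ and the series converges there in operator norm. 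The same argument applies verbatim to $R(t)$, $B_0$ and $Q$.

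Finally, the solution formulas and uniqueness are the standard variation-of-parameters (Duhamel) representation for the inhomogeneous abstract Cauchy problems (\ref{Eqn.122})--(\ref{Eqn.124}) and (\ref{Eqn.132})--(\ref{Eqn.134}), with generators $A$, $B$ and forcing terms $\tfrac{1}{T} f$, $\tfrac{1}{T}\rho$; under the stated regularity on $f$ and $\rho$ the resulting mild solution is classical and unique by semigroup theory. I do not anticipate a genuine obstacle, as the theorem is a direct application of bounded perturbation theory; the only point requiring care is the bookkeeping that converts the shifted exponent $\omega(t-T)$ of Lemma \ref{Lemma.140}(v) into the constants $M_\omega$ and $N_\omega$, so that the perturbation bound reproduces the stated growth rates exactly.
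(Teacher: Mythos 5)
Your proposal is correct and follows essentially the same route as the paper: both treat $A$ and $B$ as bounded perturbations $A = A_0 + c\,\mu_{\rm s}\Kop$, $B = B_0 - c\,\mu_{\rm s}\Kop^{*}$ of the ballistic generators, invoke the bounded perturbation theorem \cite[Ch.~3: Thm.~1.3]{Eng-Nag-2000} together with property (v) of lemma \ref{Lemma.140} to produce the semigroups and the stated growth bounds, and conclude with the standard Duhamel representation for the inhomogeneous problem. The only difference is expository: you spell out the Dyson--Phillips series and the exponent bookkeeping that the paper leaves implicit in its citation.
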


\begin{proof}
The proof for both problems \ref{Def.DirectProb} and \ref{Def.ReverseProb} is exactly the same, so we only address the well-posedness of the direct problem \ref{Def.DirectProb}. 
We use \cite[Ch. 3 : Thm. 1.3]{Eng-Nag-2000} and property (v) in lemma \ref{Lemma.140} to obtain the existence of $S(t)$ as a $C_{0}$-semigroup satisfying properties (i)-(ii) of lemma \ref{Lemma.140} and the estimate on its norm $\| S(t) \|_{\Ws}$. Now since we assume that $f \in C([0,\infty);\Wss) \cup C^{1}([0,\infty);\Ws)$, the regularity of the term
\begin{eqnarray*}
\int_{0}^{t} S(t-s) f(s) ds,
\end{eqnarray*}
follows from \cite[Ch. 7: Section 7]{Eng-Nag-2000}. 
\end{proof}

The estimate $\| R(t) \|_{\Ws} \leq N_{\omega} e^{(\omega + N_{\omega} c \overline{\mu}_{\rm s})t}$  found in theorem \ref{Thm.120} motivates us to find conditions on $\overline{\mu}_{\rm a}$, $\overline{\mu}_{\rm s}$ and a good choice of $\omega < 0$ such that the reversed semigroup $R(t)$ becomes a contraction for sufficiently large time $t > 0$. It will be evident that such a property determines the solvability of the inverse problem. See below in Section \ref{Section:InverseProblem}. So consider the function $E : \mathbb{R} \to \mathbb{R}$ given by the exponential rate found in the estimate $\| R(t) \|_{\Ws} \leq N_{\omega} e^{(\omega + N_{\omega} c \overline{\mu}_{\rm s})t}$, or in other words,
\begin{eqnarray*}
E(\omega) = \omega + e^{- \omega T}  e^{l ( \overline{\mu}_{\rm a} + \overline{\mu}_{\rm s} ) } c \overline{\mu}_{\rm s}.
\end{eqnarray*}
We would like to find an optimal choice $\omega^{*} < 0$ that minimizes $E$ over $\mathbb{R}$. Subsequently, we would like to find a condition on $\overline{\mu}_{\rm a}$ and $\overline{\mu}_{\rm s}$ leading to $E(\omega^{*}) < 0$. Clearly, $\omega^{*}$ is the unique solution to $E'(\omega) = 0$ which is easily obtained to be 
\begin{eqnarray*}
\omega^{*} = \frac{\ln \left( T  e^{l ( \overline{\mu}_{\rm a} + \overline{\mu}_{\rm s} ) } c \overline{\mu}_{\rm s}  \right)}{T}.
\end{eqnarray*}
Recalling that $l = Tc$, it follows that
\begin{eqnarray*}
E(\omega^{*}) = \frac{\ln \left( T  e^{l ( \overline{\mu}_{\rm a} + \overline{\mu}_{\rm s} ) } c \overline{\mu}_{\rm s}  \right)}{T} + \frac{1 }{T} = \frac{\ln \left( e^{l ( \overline{\mu}_{\rm a} + \overline{\mu}_{\rm s} ) }  \overline{\mu}_{\rm s} l e \right)}{T}
\end{eqnarray*}
Hence, in order for $E(\omega^{*}) < 0$, we require that $ l \overline{\mu}_{\rm s} e^{l ( \overline{\mu}_{\rm a} + \overline{\mu}_{\rm s} ) }   < e^{-1}$. Now, from the estimates in theorem \ref{Thm.120}, we clearly arrive at the following result.

\begin{proposition} \label{Prop.010}
Let $l = \text{\rm diam}(\Omega)$. If 
\begin{eqnarray*}
l \overline{\mu}_{\rm s} \, e^{l ( \overline{\mu}_{\rm a} + \overline{\mu}_{\rm s} ) }   < e^{-1}
\end{eqnarray*}
then both $\| S(t) \|_{\Ws}$ and $\| R(t) \|_{\Ws}$ decay exponentially fast according to
\begin{eqnarray*}
\| S(t) \|_{\Ws}  \leq \| R(t) \|_{\Ws} & \leq e \, e^{ l ( \overline{\mu}_{\rm a} + \overline{\mu}_{\rm s} )}  \left( e  \, e^{l ( \overline{\mu}_{\rm a} + \overline{\mu}_{\rm s} ) }  \, l \overline{\mu}_{\rm s}  \right)^{t/T - 1}   , \qquad t \geq T.
\end{eqnarray*}
\end{proposition}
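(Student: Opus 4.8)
The plan is to specialize the one-parameter family of bounds from theorem \ref{Thm.120} at the optimal rate $\omega^{*}$ already identified in the discussion preceding the statement, and then simplify algebraically until the expression takes the advertised closed form. I would work first with $\| R(t) \|_{\Ws}$, since its bound dominates the one for $\| S(t) \|_{\Ws}$, and recover the estimate for $\| S(t) \|_{\Ws}$ at the end.

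First I would check that $\omega^{*}$ is admissible, i.e. that $\omega^{*} \leq 0$, so that theorem \ref{Thm.120} may indeed be invoked at $\omega = \omega^{*}$. Using $l = Tc$, the minimizer rewrites as $\omega^{*} = T^{-1} \ln\!\big( l \overline{\mu}_{\rm s} e^{l(\overline{\mu}_{\rm a} + \overline{\mu}_{\rm s})} \big)$, and the hypothesis $l \overline{\mu}_{\rm s} e^{l(\overline{\mu}_{\rm a} + \overline{\mu}_{\rm s})} < e^{-1} < 1$ forces the argument of the logarithm to be less than $1$, whence $\omega^{*} < -1/T < 0$. If $\overline{\mu}_{\rm s} = 0$ the problem is purely ballistic and $R(t) = 0$ for $t > T$ by lemma \ref{Lemma.140}(iv), so the estimate is trivial; I would therefore assume $\overline{\mu}_{\rm s} > 0$ in the computation below.

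Next I would substitute $\omega = \omega^{*}$ into $\| R(t) \|_{\Ws} \leq N_{\omega} e^{(\omega + N_{\omega} c \overline{\mu}_{\rm s})t}$. The prefactor collapses nicely: since $e^{-\omega^{*} T} = \big( l \overline{\mu}_{\rm s} e^{l(\overline{\mu}_{\rm a} + \overline{\mu}_{\rm s})} \big)^{-1}$, one gets $N_{\omega^{*}} = e^{-\omega^{*} T} e^{l(\overline{\mu}_{\rm a} + \overline{\mu}_{\rm s})} = 1/(l \overline{\mu}_{\rm s})$, while the exponential rate is exactly $E(\omega^{*}) = T^{-1} \ln\!\big( e \, l \overline{\mu}_{\rm s} e^{l(\overline{\mu}_{\rm a} + \overline{\mu}_{\rm s})} \big)$, so that $e^{E(\omega^{*}) t} = q^{t/T}$ with $q := e \, e^{l(\overline{\mu}_{\rm a} + \overline{\mu}_{\rm s})} l \overline{\mu}_{\rm s}$. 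Writing $q^{t/T} = q \cdot q^{t/T - 1}$ and using $q/(l \overline{\mu}_{\rm s}) = e \, e^{l(\overline{\mu}_{\rm a} + \overline{\mu}_{\rm s})}$ turns $N_{\omega^{*}} q^{t/T}$ into precisely the right-hand side of the proposition. The hypothesis guarantees $q < e \cdot e^{-1} = 1$, so the factor $q^{t/T-1}$ is genuinely decaying and is $\leq 1$ once $t \geq T$, which is the regime in which the estimate is stated.

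Finally, for $\| S(t) \|_{\Ws}$ I would note that the corresponding bound $M_{\omega} e^{(\omega + M_{\omega} c \overline{\mu}_{\rm s})t}$ from theorem \ref{Thm.120} is termwise dominated by the $R$-bound at every admissible $\omega$, because $M_{\omega} = e^{-\omega T} \leq e^{-\omega T} e^{l(\overline{\mu}_{\rm a} + \overline{\mu}_{\rm s})} = N_{\omega}$ (as $e^{l(\overline{\mu}_{\rm a} + \overline{\mu}_{\rm s})} \geq 1$) and $c \overline{\mu}_{\rm s} \geq 0$; evaluating at $\omega^{*}$ then yields the same right-hand side for $\| S(t) \|_{\Ws}$. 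I expect the only genuinely delicate point to be the middle inequality $\| S(t) \|_{\Ws} \leq \| R(t) \|_{\Ws}$ read as a statement about the true operator norms rather than their upper bounds: this is consistent with the ballistic picture, where $|(S_{0}(t)v)(x,\theta)| \leq |(R_{0}(t)v)(x,\theta)|$ pointwise along characteristics (the absorption factor $e^{-c\int(\mu_{\rm a}+\mu_{\rm s})}$ is replaced by its reciprocal), but establishing it for the scattering semigroups would require a genuine comparison argument. For the purposes of the proposition it suffices, and is cleaner, to bound both $\| S(t) \|_{\Ws}$ and $\| R(t) \|_{\Ws}$ directly by the common right-hand side via the domination $M_{\omega} \leq N_{\omega}$.
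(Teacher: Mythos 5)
Your proposal is correct and follows essentially the same route as the paper: the paper's proof consists precisely of evaluating the bounds of theorem \ref{Thm.120} at the optimal rate $\omega^{*}$ computed in the discussion preceding the proposition, which is exactly your computation ($N_{\omega^{*}} = 1/(l\overline{\mu}_{\rm s})$, $e^{E(\omega^{*})t} = q^{t/T}$ with $q = e\, e^{l(\overline{\mu}_{\rm a}+\overline{\mu}_{\rm s})} l \overline{\mu}_{\rm s}$). Your added care --- checking $\omega^{*} < 0$, treating $\overline{\mu}_{\rm s} = 0$ separately, and reading the middle inequality $\| S(t) \|_{\Ws} \leq \| R(t) \|_{\Ws}$ as a comparison of the common upper bound via $M_{\omega} \leq N_{\omega}$ rather than of the true operator norms --- only tightens details the paper leaves implicit.
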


%%%%%%%%%%%%%%%%%%%%%%%%%%%%%%%%%%%%%%%%%%%%%%%%%%
%%%%%%%%%%%% NEW SECTION %%%%%%%%%%%%%%%%%%%%%%%%% %%%%%%%%%%%%%%%%%%%%%%%%%%%%%%%%%%%%%%%%%%%%%%%%%%

\section{Analysis of the stationary problem} \label{Section:StationProblem}

In this section we state the stationary or steady-state problem for the transport equation in general heterogeneous, absorbing, scattering media. This is done for the stationary problems corresponding to both, the \textit{direct problem} \ref{Def.DirectProb} and also the \textit{reversed problem} \ref{Def.ReverseProb}. We shall prove the well-posedness of both problems under the same assumptions of proposition \ref{Prop.010}. 

In the analysis of the inverse problem, it will become clear that the well-posedness of the reversed stationary problem plays two important roles. The first of these roles has to do with the definition of a time reversal operator that solves the inverse problem up to a contraction map. More precisely, this operator is written in terms of a time-reversed evolution problem which in turn needs an initial state to be well-defined. The needed initial state is then provided by the solution of a reversed stationary problem in order to obtain a crucial stability property. The second role is that a carefully chosen solution for a reversed stationary problem will allow us to stay within the framework of the space $\Wss$. As a consequence, the stability of the source reconstruction method is given in terms of the $\Wss$-norm which is stronger than that of the space $\Ws$. 

The reversed problem is particularly challenging since the coefficients in the PDE appear with the reversed signs. This means that the term containing $\mu_{\rm a}$ acts as emission instead of absorption. As a result, the weak formulation of this problem does not lead to a coercive (uniformly convex) form. Instead, it leads to a problem that is not necessarily positive definite. We shall overcome this difficulty by employing the celebrated theorem of Babu\v{s}ka \cite{Babuska-1971-01}. This is a generalization of the Lax-Milgram lemma originally designed for similar saddle-point problems. Our approach follows the ideas developed in \cite{Egg-Sch-2012-01,Man-Ress-Star-2000-01}, but we introduce certain simplifications and modifications. 

Now we proceed to state the direct stationary problem with vanishing incoming flow. In other words, we shall work in the space $\Wss_{-}$. As in the case of transient transport, when a prescribed incoming flow needs to be included, it can be lifted using the surjectivity of the trace operator $\gamma_{-}$ (see lemma \ref{Lemma.101}).

\begin{definition}[Direct Stationary Problem] \label{Def.SteadyDirectProb}
Given  $f \in \Ws$ find a solution $u \in \Wss_{-}$ to the following boundary value problem
\begin{eqnarray}
(\theta \cdot \nabla) u + \mu_{\rm a} u + \mu_{\rm s} (I - \Kop) u =  \frac{1}{l} f \quad \text{in $\Omega \times \Sph$}. \label{Eqn.085}
\end{eqnarray}
\end{definition}

And the reversed stationary problem is defined as follows.

\begin{definition}[Reversed Stationary Problem] \label{Def.SteadyReverseProb}
Given $\rho \in \Ws$ find a solution $\psi \in \Wss_{-}$ to the following boundary value problem
\begin{eqnarray}
(\theta \cdot \nabla) \psi - \mu_{\rm a} \psi - \mu_{\rm s} (I - \Kop^{*}) \psi = \frac{1}{l} \rho \quad \text{in $\Omega \times \Sph$}. \label{Eqn.087}
\end{eqnarray}
\end{definition}

We shall use stability estimates for the ballistic portion of the stationary problems. These may be obtained from Poincar\'{e}-type inequality to estimate the $\Ws$-norm of a function in terms of its directional derivative provided that inflow values vanish. Since the ballistic solution can be written explicitly using an integrating factor, the proof of a Poincar\'{e} inequality from \cite{Man-Ress-Star-2000-01}
can be easily modified to obtain the following stability estimates. 

\begin{lemma} \label{Lemma.105}
Let $l = \text{\rm diam}(\Omega)$. For all $u \in \Wss_{-}$ we have that
\begin{eqnarray*}
\| u \|_{\Ws} \leq 2^{-1/2}  \, l  \, \| (\theta \cdot \nabla) u + (\mu_{\rm a} + \mu_{\rm s}) u  \|_{\Ws} \\
\| (\theta \cdot \nabla) u \|_{\Ws} \leq  \left( 1 + 2^{-1/2} \, l \, (\overline{\mu}_{\rm a} + \overline{\mu}_{\rm s}) \right)  \, \| (\theta \cdot \nabla) u  + (\mu_{\rm a} + \mu_{\rm s}) u  \|_{\Ws}.
\end{eqnarray*}
Similarly, for all $\psi \in \Wss_{-}$ we also have
\begin{eqnarray*}
\| \psi \|_{\Ws} \leq 2^{-1/2} \, l \, e^{l (\overline{\mu}_{\rm a} + \overline{\mu}_{\rm s})}  \, \| (\theta \cdot \nabla) \psi  - (\mu_{\rm a} + \mu_{\rm s}) \psi  \|_{\Ws} \\
\| (\theta \cdot \nabla) \psi \|_{\Ws} \leq  \left( 1 + 2^{-1/2} \, l \, (\overline{\mu}_{\rm a} + \overline{\mu}_{\rm s}) \, e^{l (\overline{\mu}_{\rm a} + \overline{\mu}_{\rm s})} \right)  \, \| (\theta \cdot \nabla) \psi  - (\mu_{\rm a} + \mu_{\rm s}) \psi  \|_{\Ws} .
\end{eqnarray*}
\end{lemma}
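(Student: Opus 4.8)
The plan is to reduce every one of these four inequalities to a one-dimensional Poincar\'e estimate along the characteristic lines of the operator $\theta \cdot \nabla$, using that the ballistic operators are inverted explicitly by an integrating factor. I would first establish the estimates for $u \in C^{1}(\clo{\Omega} \times \Sph)$ with $\gamma_{-} u = 0$ and then pass to all of $\Wss_{-}$ by density, since both sides are continuous in the $\Wss$-norm. Fix $\theta \in \Sph$ and set $a := \mu_{\rm a} + \mu_{\rm s} \ge 0$ and $g := (\theta \cdot \nabla) u + a\, u$. For $x \in \Omega$ let $x_{0} \in \partial \Omega$ be the backward intersection of $r \mapsto x - r\theta$ with $\partial \Omega$, which is an inflow point, and let $t_{*} = t_{*}(x,\theta) \le l$ be the length of the chord from $x_{0}$ to $x$. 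Integrating the transported ODE with the vanishing inflow value $u(x_{0},\theta)=0$ yields the representation
\begin{eqnarray*}
u(x,\theta) = \int_{0}^{t_{*}} \exp\!\left( -\int_{t}^{t_{*}} a(x_{0}+r\theta)\, dr \right) g(x_{0}+t\theta,\theta)\, dt ,
\end{eqnarray*}
in which the exponential weight lies in $(0,1]$ because $a \ge 0$.

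The heart of the matter is a single-chord estimate. Restricting to one chord of length $L \le l$ and writing $\phi, \gamma$ for $u, g$ along it, the representation gives $|\phi(s)| \le \int_{0}^{s} |\gamma(r)|\, dr$, whence Cauchy--Schwarz yields $\phi(s)^{2} \le s \int_{0}^{L} \gamma(r)^{2}\, dr$ and, integrating in $s$,
\begin{eqnarray*}
\int_{0}^{L} \phi(s)^{2}\, ds \le \frac{L^{2}}{2} \int_{0}^{L} \gamma(r)^{2}\, dr \le \frac{l^{2}}{2} \int_{0}^{L} \gamma(r)^{2}\, dr .
\end{eqnarray*}
The constant $l^{2}/2$ is precisely the origin of the factor $2^{-1/2} l$ after taking square roots. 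For the reversed operator the sign of $a$ flips, so solving $\phi' - a\phi = \gamma$ produces the weight $\exp\!\left(\int_{t}^{t_{*}} a\right) \le e^{l(\overline{\mu}_{\rm a} + \overline{\mu}_{\rm s})}$, and the identical computation gains a factor $e^{l(\overline{\mu}_{\rm a} + \overline{\mu}_{\rm s})}$, accounting for the constant $2^{-1/2} l\, e^{l(\overline{\mu}_{\rm a} + \overline{\mu}_{\rm s})}$ in the $\psi$-estimate.

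To globalize, I would disintegrate the spatial integral over the family of chords in direction $\theta$, writing $\int_{\Omega} F\, dx = \int_{\{ y \in \partial\Omega :\, \nu(y)\cdot\theta < 0 \}} |\nu(y)\cdot\theta| \int_{0}^{L(y,\theta)} F(y+s\theta)\, ds\, dS(y)$. Since the chordwise bound holds with the same constant on every chord and the weight $|\nu \cdot \theta|$ is common to both sides, the constant factors out and gives $\| u(\cdot,\theta) \|_{\Ws}^{2} \le (l^{2}/2) \| g(\cdot,\theta) \|_{\Ws}^{2}$ for each $\theta$; integrating over $\theta \in \Sph$ delivers the first inequality, and likewise for $\psi$. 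The two derivative estimates then follow from the triangle inequality applied to $(\theta \cdot \nabla) u = g - a\, u$ together with $a \le \overline{\mu}_{\rm a} + \overline{\mu}_{\rm s}$: this gives $\| (\theta\cdot\nabla) u \|_{\Ws} \le \| g \|_{\Ws} + (\overline{\mu}_{\rm a} + \overline{\mu}_{\rm s}) \| u \|_{\Ws}$, and inserting the first estimate produces the stated factor $1 + 2^{-1/2} l (\overline{\mu}_{\rm a} + \overline{\mu}_{\rm s})$; the reversed derivative estimate is identical but carries the extra exponential.

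The genuinely delicate points are measure-theoretic rather than analytic: justifying the disintegration formula uniformly, including for directions nearly tangent to $\partial\Omega$ where the chords degenerate (harmless here, since all constants are controlled only through $L \le l$), and the density of smooth functions with vanishing inflow trace inside $\Wss_{-}$ needed to upgrade the $C^{1}$ estimates to the completion. I expect this to be the main obstacle. Both are handled exactly as in the Poincar\'e inequality of \cite{Man-Ress-Star-2000-01}, the only modification being that the integrating factor (bounded by $1$ for the direct operator and by $e^{l(\overline{\mu}_{\rm a} + \overline{\mu}_{\rm s})}$ for the reversed one) is carried through their argument.
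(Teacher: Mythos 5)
Your proposal is correct and follows exactly the route the paper takes: the paper's proof consists of the remark that the ballistic solution can be written explicitly via an integrating factor, so that the Poincar\'e inequality of \cite{Man-Ress-Star-2000-01} can be modified to yield the four estimates, and your chord-wise Cauchy--Schwarz computation (with weight $\leq 1$ for the direct operator and $\leq e^{l(\overline{\mu}_{\rm a}+\overline{\mu}_{\rm s})}$ for the reversed one), disintegration over characteristics, and density argument are precisely that modification carried out in detail, with all constants matching.
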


The constants appearing in the above lemma will apear several times in the sequel. Hence, in order to shorten some expressions, we introduce the following notation,
\begin{eqnarray}
\alpha_{0} = 1 + \sqrt{2} + l \, (\overline{\mu}_{\rm a} + \overline{\mu}_{\rm s}),  \qquad 
\beta_{0} = \sqrt{2} + \left( 1 +  l \, (\overline{\mu}_{\rm a} + \overline{\mu}_{\rm s}) \right) \, e^{l (\overline{\mu}_{\rm a} + \overline{\mu}_{\rm s})}. \label{Eqn.1002}
\end{eqnarray}
As a consequence of lemma \ref{Lemma.105} and (\ref{Eqn.092}), we obtain the following estimates concerning the ballistic operators,
\begin{eqnarray}
\| u \|_{\Wss} \leq  l \, \alpha_{0} \, \| (\theta \cdot \nabla) u + (\mu_{\rm a} + \mu_{\rm s}) u \|_{\Ws} \qquad \text{for all $u \in \Wss_{-}$}, \label{Eqn.1005} \\
\| \psi \|_{\Wss} \leq  l \, \beta_{0} \, \| (\theta \cdot \nabla) \psi - (\mu_{\rm a} + \mu_{\rm s}) \psi \|_{\Ws} \qquad \text{for all $\psi \in \Wss_{-}$}. \label{Eqn.1007}
\end{eqnarray}

In order to prove the well-posedness of the stationary problems \ref{Def.SteadyDirectProb} and \ref{Def.SteadyReverseProb}, we first set up associated variational problems similar to those of \cite{Egg-Sch-2012-01}. However, as opposed to \cite{Egg-Sch-2012-01}, we directly seek for a solution in the trial space $\Wss_{-}$ which already has enough regularity for the solution to satisfy the transport PDE in a strong sense. Without further ado, we define the bilinear forms governing the stationary problems \ref{Def.SteadyDirectProb}-\ref{Def.SteadyReverseProb}.

Let $a , b : \Wss_{-} \times \Ws \to \mathbb{R}$ be bilinear forms given by
\begin{eqnarray}
a(u,v) &= l \, \la (\theta \cdot \nabla) u + \mu_{\rm a} u + \mu_{\rm s} (I - \Kop) u , v \ra_{\Ws}, \label{Eqn.101} \\
b(\psi, \phi) &= l \, \la (\theta \cdot \nabla) \psi - \mu_{\rm a} \psi - \mu_{\rm s} (I - \Kop^{*}) \psi , \phi \ra_{\Ws}. \label{Eqn.102}
\end{eqnarray}

Hence, the stationary problems \ref{Def.SteadyDirectProb}-\ref{Def.SteadyReverseProb} are equivalent to find $u, \psi \in \Wss_{-}$ such that 
\begin{eqnarray}
a(u,v) &= \la f , v \ra_{\Ws} \quad \text{for all $v \in \Ws$}, \label{Eqn.105} \\
b(\psi, \phi) &= \la \rho , \phi \ra_{\Ws} \quad \text{for all $\phi \in \Ws$}. \label{Eqn.107}
\end{eqnarray}

As stated above, the reversed problem \ref{Def.SteadyReverseProb}, governed by the bilinear form $b(\cdot,\cdot)$ poses the greater challenge due to its lack of positive definiteness. Hence, we focus on this case, and the steps can be easily modified to deal with the more favorable structure of the other bilinear form $a(\cdot,\cdot)$. We start with the following lemma.

\begin{lemma} \label{Lemma.209}
If $ l \overline{\mu}_{s} \, e^{l(\overline{\mu}_{a} + \overline{\mu}_{s})} < e^{-1}$, then for each $\psi \in \Wss_{-}$ there exists $\phi \in \Ws$ such that $b(\psi,\phi) \geq \beta  \| \psi \|_{\Wss} \| \phi \|_{\Ws}$, where the constant $\beta > 0$ is independent of $\psi$ and $\phi$. In fact, 
\begin{eqnarray*}
\beta = \frac{ \left( \sqrt{2} \, e  - 1 \right) }{ \sqrt{2} \, e \, \beta_{0}} ,
\end{eqnarray*}
where $\beta_{0}$ is defined in (\ref{Eqn.1002}). Moreover, for each non-zero $\phi \in \Ws$ there exists $\psi \in \Wss_{-}$ such that $b(\psi,\phi) > 0$.

An analogous claim holds true for the other bilinear form $a(\cdot,\cdot)$ except that the constant $\beta$ is replaced by
\begin{eqnarray*}
\alpha = \frac{\left( \sqrt{2} \, e - 1 \right) }{ \sqrt{2} \, e \, \alpha_{0}}
\end{eqnarray*}
where $\alpha_{0}$ is defined in (\ref{Eqn.1002}).
\end{lemma}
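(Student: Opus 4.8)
The plan is to verify the two hypotheses of Babu\v{s}ka's theorem — an inf-sup estimate on the diagonal and a non-degeneracy condition on the test space — with the advertised constant $\beta$ emerging from a single well-chosen test function. For the inf-sup bound, given $\psi \in \Wss_{-}$ I would take the test function $\phi = (\theta\cdot\nabla)\psi - (\mu_{\rm a}+\mu_{\rm s})\psi \in \Ws$, i.e.\ the reversed \emph{ballistic} operator applied to $\psi$. Splitting off the scattering part gives
\[
b(\psi,\phi) = l\,\|\phi\|_{\Ws}^2 + l\,\la \mu_{\rm s}\Kop^{*}\psi, \phi\ra_{\Ws}.
\]
The first term is the favorable one; the cross term is controlled by Cauchy--Schwarz together with $\|\Kop^{*}\|=1$ (lemma \ref{Lemma.145}) and the reversed Poincar\'e estimate $\|\psi\|_{\Ws}\le 2^{-1/2}\,l\,e^{l(\overline{\mu}_{\rm a}+\overline{\mu}_{\rm s})}\|\phi\|_{\Ws}$ of lemma \ref{Lemma.105}. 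This yields
\[
b(\psi,\phi)\ge l\bigl(1 - 2^{-1/2}\,l\overline{\mu}_{\rm s}\,e^{l(\overline{\mu}_{\rm a}+\overline{\mu}_{\rm s})}\bigr)\|\phi\|_{\Ws}^2,
\]
and the hypothesis $l\overline{\mu}_{\rm s}e^{l(\overline{\mu}_{\rm a}+\overline{\mu}_{\rm s})}<e^{-1}$ makes the prefactor at least $(\sqrt2\,e-1)/(\sqrt2\,e)$. Finally I would invoke (\ref{Eqn.1007}) in the form $\|\psi\|_{\Wss}\|\phi\|_{\Ws}\le l\beta_0\|\phi\|_{\Ws}^2$ to trade the quantity $l\|\phi\|_{\Ws}^2$ for $\|\psi\|_{\Wss}\|\phi\|_{\Ws}$; dividing by $\beta_0$ produces exactly $\beta=(\sqrt2\,e-1)/(\sqrt2\,e\,\beta_0)$. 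The degenerate case $\phi=0$ forces $\psi=0$ by the same Poincar\'e inequality, so the bound is vacuous there.

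For the non-degeneracy claim it is equivalent, by linearity of $b$ in $\psi$, to show that $b(\psi,\phi)=0$ for all $\psi\in\Wss_{-}$ forces $\phi=0$. Testing first against $\psi\in C^{1}$ compactly supported in $\Omega\times\Sph$ and integrating by parts (using that $\mu_{\rm s}$ commutes with $\Kop$ and that $\Kop$ is the $\Ws$-adjoint of $\Kop^{*}$), one finds that $\phi$ satisfies the homogeneous direct equation $(\theta\cdot\nabla)\phi+\mu_{\rm a}\phi+\mu_{\rm s}(I-\Kop)\phi=0$ in the distributional sense; since the zeroth-order terms lie in $\Ws$, this forces $(\theta\cdot\nabla)\phi\in\Ws$, hence $\phi\in\Wss$ with a well-defined trace. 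Returning to general $\psi\in\Wss_{-}$ and applying Green's identity with $\gamma_{-}\psi=0$, every volume contribution cancels by the equation just obtained, leaving $l\int_{\Gout}(\theta\cdot\nu)\psi\phi=0$; the surjectivity of $\gamma_{+}$ (lemma \ref{Lemma.101}) then forces $\gamma_{+}\phi=0$. Thus $\phi$ solves the homogeneous direct transport equation with vanishing outflow trace.

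To conclude $\phi=0$ I would recast the equation as $(\theta\cdot\nabla)\phi+(\mu_{\rm a}+\mu_{\rm s})\phi=\mu_{\rm s}\Kop\phi$ and invert the direct ballistic operator subject to zero outflow data. Under the direction flip $\theta\mapsto-\theta$ this operator becomes the reversed ballistic operator with zero inflow data, whose inverse is bounded with norm $\le 2^{-1/2}\,l\,e^{l(\overline{\mu}_{\rm a}+\overline{\mu}_{\rm s})}$ by lemma \ref{Lemma.105}; composing with $\mu_{\rm s}\Kop$ (of norm $\le\overline{\mu}_{\rm s}$ since $\|\Kop\|=1$) yields a fixed-point map of norm $\le 2^{-1/2}\,l\overline{\mu}_{\rm s}\,e^{l(\overline{\mu}_{\rm a}+\overline{\mu}_{\rm s})}<2^{-1/2}e^{-1}<1$, so the only solution is $\phi=0$, the desired contradiction. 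The statement for $a(\cdot,\cdot)$ follows verbatim with $\Kop^{*}$ replaced by $\Kop$, the signs on $\mu_{\rm a},\mu_{\rm s}$ reversed, and the first (non-exponential) Poincar\'e estimate of lemma \ref{Lemma.105} and (\ref{Eqn.1005}) used in place of the reversed ones, so that $\beta_0$ is replaced by $\alpha_0$.

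I expect the main obstacle to be this second, non-degeneracy condition. Unlike the inf-sup bound, which reduces to a single choice of test function followed by routine bookkeeping, it requires promoting the weak identity to the strong direct equation, extracting the correct outflow boundary condition, and then ruling out nontrivial solutions — the step where the smallness hypothesis $l\overline{\mu}_{\rm s}e^{l(\overline{\mu}_{\rm a}+\overline{\mu}_{\rm s})}<e^{-1}$ is genuinely invoked a second time.
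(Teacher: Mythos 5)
Your inf-sup argument is exactly the paper's: the paper tests with $\phi = l\,\left((\theta\cdot\nabla)\psi-(\mu_{\rm a}+\mu_{\rm s})\psi\right)$ (your choice rescaled by $l$, which is immaterial since the bound is homogeneous in $\phi$), expands $b(\psi,\phi)=\|\phi\|_{\Ws}^{2}+l\la\mu_{\rm s}\Kop^{*}\psi,\phi\ra_{\Ws}$, and uses Cauchy--Schwarz, lemma \ref{Lemma.105} and (\ref{Eqn.1007}) to land on the same constant $\beta$. The non-degeneracy part is where you diverge, and where your argument has a genuine gap. Having derived that $\phi$ satisfies the homogeneous direct equation distributionally, you assert that $(\theta\cdot\nabla)\phi\in\Ws$ ``hence $\phi\in\Wss$ with a well-defined trace.'' That inference is not valid: by (\ref{Eqn.148}), $\Wss$ is the completion with respect to a norm that \emph{includes} the weighted boundary term $l\la|\nu\cdot\theta|u,u\ra_{L^{2}(\partial\Omega\times\Sph)}$, and it is a classical point of the trace theory behind lemma \ref{Lemma.101} (Cessenat) that $u,\,(\theta\cdot\nabla)u\in\Ws$ does \emph{not} imply that the trace lies in $\Ts$; the weight $|\nu\cdot\theta|$ fails to control near-grazing directions. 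So you are not entitled to place $\phi$ in $\Wss$, and the subsequent application of Green's identity (which the paper states only for pairs in $\Wss$) and the extraction of $\gamma_{+}\phi=0$ are unjustified as written. A minor further point: you also need $\gamma_{+}$ restricted to $\Wss_{-}$ to be onto $\Ts_{+}$, which is slightly more than lemma \ref{Lemma.101} states, though it does follow from surjectivity of the full trace $\gamma$. These holes are repairable, e.g.\ by truncating away from grazing directions and integrating along characteristics, but the repair is real work rather than bookkeeping.

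The paper avoids all of this with a one-line construction that uses precisely the tool you save for your final step, invertibility of the ballistic operator, but applies it directly to $\phi$ rather than to a hypothetical degenerate solution. Given non-zero $\phi\in\Ws$, choose $\psi\in\Wss_{-}$ solving the reversed ballistic stationary problem with vanishing inflow data and source $\phi/l$, i.e.\ $(\theta\cdot\nabla)\psi-(\mu_{\rm a}+\mu_{\rm s})\psi=\phi/l$, which is solvable by the method of characteristics. Then $b(\psi,\phi)=\|\phi\|_{\Ws}^{2}+l\la\mu_{\rm s}\Kop^{*}\psi,\phi\ra_{\Ws}\ge\left(1-(\sqrt{2}\,e)^{-1}\right)\|\phi\|_{\Ws}^{2}>0$, by exactly the same estimate used in the inf-sup part, with the smallness hypothesis entering in the same way. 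In short: rather than showing that a $\phi$ degenerate for $b$ must solve the adjoint equation with zero outflow and then contracting it to zero, invert the ballistic operator once to \emph{manufacture} the test function; this keeps the entire argument inside $\Wss_{-}$ and requires no distributional bootstrapping or trace recovery.
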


\begin{proof}
For given $\psi \in \Wss_{-}$, let $ \phi = l \, \left( (\theta \cdot \nabla) \psi - (\mu_{\rm a} + \mu_{\rm s}) \psi \right) \in \Ws$. Then we have,
\begin{eqnarray*}
 b(\psi, \phi) &= \| \phi \|^{2}_{\Ws} + l \la \mu_{\rm s} \Kop^{*} \psi , \phi \ra_{\Ws} \\
&\geq \| \phi \|^{2}_{\Ws} - l \overline{\mu}_{s} \, \| \psi \|_{\Ws} \| \phi \|_{\Ws} \\
&\geq  \left( 1 - 2^{-1/2} \, l \overline{\mu}_{s} \, e^{l(\overline{\mu}_{a} + \overline{\mu}_{s})}  \right) \| \phi \|^{2}_{\Ws} \geq  \left( 1 - (\sqrt{2} \, e )^{-1} \right) \| \phi \|^{2}_{\Ws}
\end{eqnarray*}
where we have used an estimate from lemma \ref{Lemma.105}. Now it only remains to show that $\| \phi \|_{\Ws}$ dominates $\| \psi \|_{\Wss}$, which is precisely what we obtain from (\ref{Eqn.1007}).

For the second part, given non-zero $\phi \in \Ws$, we choose $\psi \in \Wss_{-}$ such that $ (\theta \cdot \nabla) \psi - (\mu_{\rm a} + \mu_{\rm s}) = \phi / l$ in $(\Omega \times \Sph)$. This can indeed be accomplished since the ballistic stationary problem with vanishing inflow data is uniquely solvable in $\Wss_{-}$. Therefore, we have
\begin{eqnarray*}
b(\psi, \phi) &\geq  \| \phi \|^{2}_{\Ws} - l \overline{\mu}_{s} \| \psi \|_{\Ws} \| \phi \|_{\Ws} \\
&\geq  \| \phi \|^{2}_{\Ws} \left( 1 - 2^{-1/2} \, l \overline{\mu}_{\rm s} \,  e^{l (\overline{\mu}_{\rm a} + \overline{\mu}_{\rm s})} \right)   \\
&\geq  \| \phi \|^{2}_{\Ws} \left( 1 - (\sqrt{2} \, e )^{-1} \right) > 0,
\end{eqnarray*}
where we have used an estimate from lemma \ref{Lemma.105}. The proof for the other bilinear form $a(\cdot,\cdot)$ is analogous using the estimate (\ref{Eqn.1005}).
\end{proof}

The above lemma holds under the condition that $ l \overline{\mu}_{s} \, e^{l(\overline{\mu}_{a} + \overline{\mu}_{s})} < e^{-1}$. This choice was purposely made to coincide with the hypothesis of proposition \ref{Prop.010}. It will become clear in Section \ref{Section:InverseProblem} that the conclusion of proposition \ref{Prop.010} is crucial for the well-posedness of the inverse problem. Now we can state and prove the well-posedness of the stationary problems. Lemma 4.4 and Babu\v{s}ka's theorem \cite{Babuska-1971-01} lead to the well-posedness of the stationary problems as stated in the following theorem.

\begin{theorem} \label{Thm.550}
If $ l \overline{\mu}_{s} \, e^{l(\overline{\mu}_{a} + \overline{\mu}_{s})} < e^{-1}$, then for each $\rho \in \Ws$, then the direct stationary problem \ref{Def.SteadyDirectProb} and the reversed stationary problem \ref{Def.SteadyReverseProb} are well-posed with stability estimate of the form,
\begin{eqnarray*}
\| u \|_{\Wss} \leq \frac{1}{\alpha} \| f \|_{\Ws}. \qquad \text{and} \qquad \| \psi \|_{\Wss} \leq \frac{1}{\beta} \| \rho \|_{\Ws},
\end{eqnarray*}
where the constant $\alpha, \beta > 0$ are defined in lemma \ref{Lemma.209}.
\end{theorem}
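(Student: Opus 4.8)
The plan is to recognize the two variational problems (\ref{Eqn.105})--(\ref{Eqn.107}) as fitting exactly into the hypotheses of Babu\v{s}ka's generalized Lax--Milgram theorem \cite{Babuska-1971-01}, with trial space $\Wss_{-}$ and test space $\Ws$. Since the harder reversed problem governed by $b(\cdot,\cdot)$ has already been handled in lemma \ref{Lemma.209}, the remaining work is mainly to collect the ingredients the theorem requires and then invoke it; the direct problem governed by $a(\cdot,\cdot)$ is treated identically with $\alpha$ in place of $\beta$.

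First I would verify that $b$ is a bounded bilinear form on $\Wss_{-} \times \Ws$. Writing out (\ref{Eqn.102}) and applying Cauchy--Schwarz in $\Ws$, each term is controlled: from the definition (\ref{Eqn.148}) of the $\Wss$-norm one has $l \, \|(\theta \cdot \nabla) \psi\|_{\Ws} \leq \|\psi\|_{\Wss}$ and $\|\psi\|_{\Ws} \leq \|\psi\|_{\Wss}$, while the coefficient bounds of assumption \ref{Assump.101} together with the operator-norm identity $\|\Kop^{*}\| = 1$ from lemma \ref{Lemma.145} bound the remaining factors. This yields $|b(\psi,\phi)| \leq C_{b} \, \|\psi\|_{\Wss} \|\phi\|_{\Ws}$ with $C_{b}$ depending only on $l, \overline{\mu}_{\rm a}, \overline{\mu}_{\rm s}$, and likewise $|a(u,v)| \leq C_{a} \, \|u\|_{\Wss} \|v\|_{\Ws}$.

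Next I would read off the two structural conditions of Babu\v{s}ka's theorem directly from lemma \ref{Lemma.209}. The first part of that lemma---that for every $\psi \in \Wss_{-}$ there is $\phi \in \Ws$ with $b(\psi,\phi) \geq \beta \, \|\psi\|_{\Wss} \|\phi\|_{\Ws}$---is precisely the inf--sup bound
\begin{eqnarray*}
\sup_{0 \neq \phi \in \Ws} \frac{b(\psi,\phi)}{\|\phi\|_{\Ws}} \geq \beta \, \|\psi\|_{\Wss}, \qquad \psi \in \Wss_{-}.
\end{eqnarray*}
The second part---that every non-zero $\phi \in \Ws$ admits some $\psi \in \Wss_{-}$ with $b(\psi,\phi) > 0$---is the non-degeneracy condition guaranteeing that no test function is annihilated. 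Finally, for fixed $\rho \in \Ws$ the map $\phi \mapsto \la \rho, \phi \ra_{\Ws}$ is a bounded linear functional on $\Ws$ whose dual norm equals $\|\rho\|_{\Ws}$ by the Riesz representation, and the analogous statement holds for $f$.

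With these three facts in place, Babu\v{s}ka's theorem yields a unique $\psi \in \Wss_{-}$ solving (\ref{Eqn.107}) together with the a priori bound $\|\psi\|_{\Wss} \leq \beta^{-1} \|\rho\|_{\Ws}$, which is the claimed estimate; substituting $a, \alpha, f$ for $b, \beta, \rho$ gives the direct problem. Because the solution lies in $\Wss_{-}$, it carries vanishing inflow trace and has enough regularity to satisfy (\ref{Eqn.087}) (resp. (\ref{Eqn.085})) strongly in $\Ws$, so the variational and PDE formulations coincide. The only genuinely delicate step is the inf--sup estimate, but that has already been secured in lemma \ref{Lemma.209} via the test-function choice $\phi = l \, ( (\theta \cdot \nabla) \psi - (\mu_{\rm a} + \mu_{\rm s}) \psi )$ together with the smallness hypothesis $l \overline{\mu}_{\rm s} \, e^{l(\overline{\mu}_{\rm a} + \overline{\mu}_{\rm s})} < e^{-1}$; consequently the present theorem is essentially a direct corollary.
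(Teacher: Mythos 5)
Your proposal is correct and follows essentially the same route as the paper, which proves Theorem \ref{Thm.550} simply by invoking lemma \ref{Lemma.209} together with Babu\v{s}ka's theorem \cite{Babuska-1971-01}, with the trial space $\Wss_{-}$ and test space $\Ws$ exactly as you set them up. Your write-up in fact supplies details the paper leaves implicit (boundedness of the bilinear forms, the identification of the two parts of lemma \ref{Lemma.209} with the inf--sup and non-degeneracy hypotheses, and the equivalence of the variational and strong formulations for solutions in $\Wss_{-}$), all of which are verified correctly.
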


We also wish to state here the following corollary that we will employ in the analysis of the inverse problem in Section \ref{Section:InverseProblem}.

\begin{corollary} \label{Cor.001}
Let $\{ S(t) \}_{t \geq 0}$ and $\{ R(t) \}_{t \geq 0}$ be the $C_{0}$-semigroups associated with the direct and reversed problems \ref{Def.DirectProb} and \ref{Def.ReverseProb}, respectively. If $ l \overline{\mu}_{s} \, e^{l(\overline{\mu}_{a} + \overline{\mu}_{s})} < e^{-1}$, then these semigroups are bounded linear operators from $\Wss_{-}$ to $\Wss_{-}$ satisfying the following estimates
\begin{eqnarray*}
\fl \| S(t) \|_{\Wss} & \leq \frac{ 1 + l (\overline{\mu}_{a} + 2 \overline{\mu}_{s} ) }{\alpha } \, \| S(t) \|_{\Ws}  \qquad \text{and} \qquad   
\| R(t) \|_{\Wss} & \leq \frac{ 1 + l (\overline{\mu}_{a} + 2 \overline{\mu}_{s})  }{\beta } \, \| R(t) \|_{\Ws} 
\end{eqnarray*}
where $\alpha, \beta > 0$ are defined in lemma \ref{Lemma.209}.
\end{corollary}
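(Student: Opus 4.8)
The plan is to exploit the observation that, for each \emph{frozen} time $t$, the evolution orbit $u(t) = S(t)v$ of a point $v \in \Wss_{-}$ is itself a solution of the \emph{direct stationary problem} whose forcing is governed by the time derivative $\dot u(t)$, and then to control that time derivative through the $\Ws$-operator norm of the semigroup. Concretely, I would fix $v \in \Wss_{-} = D(A)$ and set $u(t) = S(t)v$. Since $v$ lies in the domain of the generator, Lemma \ref{Lemma.140}(i) together with Theorem \ref{Thm.120} guarantee that $t \mapsto u(t)$ is continuously differentiable into $\Ws$ with $\dot u(t) = A u(t) = S(t) A v$, the last equality read in $\Ws$ with $S(t)$ the bounded $\Ws$-extension. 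Reading off $A$ from (\ref{Eqn.126}), the identity $\dot u(t) = A u(t)$ says precisely that $u(t)$ satisfies the stationary equation (\ref{Eqn.085}) of Definition \ref{Def.SteadyDirectProb} with right-hand side $f = -T\dot u(t) \in \Ws$, where $T = l/c$.

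Next I would invoke the well-posedness estimate of Theorem \ref{Thm.550}, which gives $\|u(t)\|_{\Wss} \le \alpha^{-1}\|f\|_{\Ws} = (T/\alpha)\|\dot u(t)\|_{\Ws}$. The time derivative is then estimated by $\|\dot u(t)\|_{\Ws} = \|S(t) A v\|_{\Ws} \le \|S(t)\|_{\Ws}\|Av\|_{\Ws}$ with $\|Av\|_{\Ws} = c\,\|(\theta\cdot\nabla)v + \mu_{\rm a} v + \mu_{\rm s}(I-\Kop)v\|_{\Ws}$. It then remains to absorb this quantity into $\|v\|_{\Wss}$: by the triangle inequality together with $\|\Kop\| = 1$ from Lemma \ref{Lemma.145} (so $\|(I-\Kop)v\|_{\Ws} \le 2\|v\|_{\Ws}$), and then using that the definition (\ref{Eqn.148}) of the $\Wss$-norm dominates both $l\|(\theta\cdot\nabla)v\|_{\Ws}$ and $\|v\|_{\Ws}$, one finds $\|Av\|_{\Ws} \le c\,l^{-1}\bigl(1 + l(\overline{\mu}_{\rm a} + 2\overline{\mu}_{\rm s})\bigr)\|v\|_{\Wss}$. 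Chaining the three estimates and simplifying with $Tc = l$ collapses all dimensional factors and yields exactly $\|S(t)v\|_{\Wss} \le \alpha^{-1}\bigl(1 + l(\overline{\mu}_{\rm a} + 2\overline{\mu}_{\rm s})\bigr)\|S(t)\|_{\Ws}\|v\|_{\Wss}$; taking the supremum over $\|v\|_{\Wss} = 1$ proves the claim for $S$. The estimate for $R$ is identical word for word, replacing $A$ by $B$ from (\ref{Eqn.136}), the direct stationary problem by the reversed one (Definition \ref{Def.SteadyReverseProb}), $\Kop$ by $\Kop^{*}$ (also of norm $1$ by Lemma \ref{Lemma.145}), and the constant $\alpha$ by $\beta$.

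The only genuinely delicate point is the first step: reinterpreting the orbit $S(t)v$ as a stationary solution with forcing $-T\dot u(t)$ and justifying $\dot u(t) = S(t)Av$ in $\Ws$. This is exactly where the membership $v \in \Wss_{-} = D(A)$ and the $\Ws$-differentiability of the semigroup (Lemma \ref{Lemma.140}(i)) are essential; without this regularity the forcing $f$ would not be a well-defined element of $\Ws$ and Theorem \ref{Thm.550} could not be applied. Everything after that is bookkeeping of constants, and the factor $1 + l(\overline{\mu}_{\rm a} + 2\overline{\mu}_{\rm s})$ is precisely the combination $\|(\theta\cdot\nabla)v\|_{\Ws} + (\overline{\mu}_{\rm a} + 2\overline{\mu}_{\rm s})\|v\|_{\Ws}$ measured against $\|v\|_{\Wss}$.
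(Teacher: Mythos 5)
Your proposal is correct and follows essentially the same route as the paper's own proof: view the orbit $S(t)v$ as a solution of the direct stationary problem with forcing proportional to $\dot u(t)$, apply the stability estimate of Theorem \ref{Thm.550}, commute the semigroup with its generator ($S(t)Av = AS(t)v$ for $v \in \Wss_{-}$), and bound $\|A\| \leq c\,(l^{-1} + \overline{\mu}_{\rm a} + 2\overline{\mu}_{\rm s})$ as an operator from $\Wss_{-}$ to $\Ws$. Your write-up merely spells out explicitly (via Lemma \ref{Lemma.145} and the definition (\ref{Eqn.148}) of the $\Wss$-norm) the operator-norm bound that the paper states as ``clear,'' which is a fair elaboration rather than a different argument.
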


\begin{proof}
Let $u_{0} \in \Wss_{-}$ be arbitrary and in view of theorem \ref{Thm.550}, we have
\begin{eqnarray*}
\| S(t) u_{0} \|_{\Wss} & \leq \frac{l}{c \alpha} \| \dot{u}(t) \|_{\Ws}  = \frac{l}{c \alpha} \| A S(t) u_{0}  \|_{\Ws} = \frac{1}{c \alpha} \| S(t) A  u_{0}  \|_{\Ws} \\
& \leq \frac{l}{c \alpha} \| S(t) \|_{\Ws} \| A  u_{0}  \|_{\Ws} \leq \frac{l}{c \alpha} \| S(t) \|_{\Ws} \| A \|  \| u_{0}  \|_{\Wss}.
\end{eqnarray*}
We have used the fact that a semigroup commutes with its generator when acting on the domain of the generator. In other words, $S(t)A v = AS(t) v $ for all $v \in \Wss_{-}$. The boundedness of $S(t) : \Ws \to \Ws$ was obtained in theorem \ref{Thm.120}, and it is clear that $A : \Wss_{-} \to \Ws$ is bounded with $\|A\| \leq c ( l^{-1} + \overline{\mu}_{a} + 2 \overline{\mu}_{s})$. The proof for $R(t)$ is similar.
\end{proof}

%%%%%%%%%%%%%%%%%%%%%%%%%%%%%%%%%%%%%%%%%%%%%%%%%%
%%%%%%%%%%%% NEW SECTION %%%%%%%%%%%%%%%%%%%%%%%%% %%%%%%%%%%%%%%%%%%%%%%%%%%%%%%%%%%%%%%%%%%%%%%%%%%

\section{The inverse problem} \label{Section:InverseProblem}

With the tools developed in the previous two sections, we are ready to solve the inverse problem \ref{Def.InvProb}. We base our analysis on the time reversal method, which is usually employed for the wave equation. In particular, our approach mimics that of \cite{Ste-Uhl-2009-01} for the thermoacoustic tomography problem. The time reversal method is particularly well-suited for the wave equation because that equation is invariant under time inversion. Unfortunately, the radiative transport equation does not enjoy such a property, and this represents our main challenge. However, the theory expanded in Sections \ref{Section:TheoryDirect}-\ref{Section:StationProblem} provides the necessary tools to deal with this difficulty.

Let $u \in C^{1}( [0,\tau]; \Ws) \cap C( [0,\tau]; \Wss_{-})$ solve the Cauchy problem,
\begin{eqnarray}
\dot{u}(t) &= A u(t) \quad \text{for $0 < t \leq \tau$}, \label{Eqn.622} \\
u(0) &= u_{0},   \label{Eqn.624}
\end{eqnarray}
where the operator $A : \Wss_{-} \to \Ws$ is defined in (\ref{Eqn.126}) and the initial condition $u_{0} \in \Wss_{-}$. The boundary measurements are modeled by the operator $\Lambda : \Wss_{-} \to C( [0,\tau]; \Ts_{+})$ defined by (\ref{Eqn.630}) for some chosen time $\tau > 0$. Recall also Definition \ref{Def.InvProb} of the inverse problem.

We wish to determine the existence of a finite measurement time $\tau$ which ensures the recovery of $u_{0}$ from $\Lambda u_{0}$. Clearly we need $\tau \geq T = \text{diam}(\Omega) / c$ because $T$ is the time required for the special case when the medium is non-scattering.

In order to employ a time reversal argument, we define the following \textit{reflection operator} $U : C( [0,\tau]; \Wss) \to C( [0,\tau]; \Wss)$ given by
\begin{eqnarray}
(U v)(x,\theta , t) = v(x, - \theta , \tau - t). \label{Eqn.633}
\end{eqnarray}
This operator is certainly bounded, and in fact it is norm-preserving. Moreover, it is also well-defined and norm-preserving as $U : C^{1}( [0,\tau]; \Ws) \to C^{1}( [0,\tau]; \Ws)$ and $U : C( [0,\tau]; \Ts_{\pm}) \to C( [0,\tau]; \Ts_{\mp})$, and in all cases $U^2 = I$. We will also use the fact that 
\begin{eqnarray}
U \Kop = \Kop^{*} U \label{Eqn.999}
\end{eqnarray} 
which follows from the reciprocity condition (\ref{Eqn.013}) satisfied by the scattering kernel $\kappa$.

It is not hard to see that $\tilde{u} := (Uu) \in C^{1}( [0,\tau]; \Ws) \cap C( [0,\tau]; \Wss)$ solves the following reversed initial boundary value problem,
\begin{eqnarray}
\dot{\tilde{u}}(t) = B \tilde{u}(t) \quad \text{for $0 \leq t \leq \tau$}, \label{Eqn.640} \\
\tilde{u}(0) = (Uu)(0),   \label{Eqn.642} \\
\gamma_{-} \tilde{u}(t) = ( U \Lambda u_{0})(t) \quad \text{for $0 \leq t \leq \tau$},  \label{Eqn.644}
\end{eqnarray}
where the generator $B : \Wss_{-} \to \Ws$ is defined in (\ref{Eqn.136}).

If we had access to $(Uu)(0)$, then we could solve (\ref{Eqn.640})-(\ref{Eqn.644}) and it would follow that $u_{0} = (U \tilde{u})(0)$. Unfortunately, this is not realistic and we only have access to the boundary measurements $\Lambda u_{0}$. For the time reversal method, $(Uu)(0)$ is simply replaced by a known function $\psi$ of our choice. 

Inspired by the work of Stefanov and Uhlmann \cite{Ste-Uhl-2009-01} we employ the reversed stationary problem \ref{Def.SteadyReverseProb} as a lift to obtain an initial condition $\psi$ that conforms to the boundary data $(U \Lambda u_{0})(t)$ at $t=0$. Such a function would satisfy a boundary value problem of the form,
\begin{eqnarray}
(\theta \cdot \nabla) \psi - \mu_{\rm a} \psi - \mu_{\rm s} (I - \Kop^{*}) \psi = 0 \quad \text{in $(\Omega \times \Sph)$}, \label{Eqn.701} \\
\gamma_{-} \psi = h_{0} \quad \text{on $\Gin$}. \label{Eqn.702}
\end{eqnarray}
for $h_{0} \in \Ts_{-}$. As seen below, in practice we choose $h_{0} = (U \Lambda u_{0})(0)$.

With this choice of $\psi$, we proceed to define the time reversal operator which acts as an approximate left-inverse for $\Lambda$. Given $h \in C([0,\tau];\Ts_{+})$, find $v \in C^{1}( [0,\tau]; \Ws) \cap C( [0,\tau]; \Wss)$ satisfying
\begin{eqnarray}
\dot{v}(t) = B v(t) \quad \text{for $0 \leq t \leq \tau$}, \label{Eqn.750} \\
v(0) = \psi,   \label{Eqn.752} \\
\gamma_{-} v(t) = (U h)(t) \quad \text{for $0 \leq t \leq \tau$}.  \label{Eqn.754}
\end{eqnarray}
where $\psi$ satisfies (\ref{Eqn.701})-(\ref{Eqn.702}) with $h_{0} = (Uh)(0)$.

The time reversal operator $G : C([0,\tau];\Ts_{+}) \to \Wss$ is given by
\begin{eqnarray}
G h = (U v)(0). \label{Eqn.756}
\end{eqnarray}

We should prove the well-posedness of the boundary value problem (\ref{Eqn.701})-(\ref{Eqn.702}) and the initial boundary value problem (\ref{Eqn.750})-(\ref{Eqn.754}). These problems are not the same as the respective reversed problems \ref{Def.SteadyReverseProb} and \ref{Def.ReverseProb} because now we are prescribing a non-zero inflow data in (\ref{Eqn.702}) and (\ref{Eqn.754}). However, since the trace operator $\gamma_{-}$ is surjective, we can always lift the boundary data and pose new problems in the $\Wss_{-}$. Then, theorems \ref{Thm.550} and \ref{Thm.120} yield the well-posedness of these two problems, respectively. As a consequence, then $G$ is a bounded operator. 

We are interested in making $I - G \Lambda$ a contraction mapping in the space $\Wss_{-}$ for some properly chosen measurement time $\tau < \infty$. This is the basic idea employed in \cite{Ste-Uhl-2009-01}. We proceed to state our result in the form of a theorem.

\begin{theorem} \label{Thm.600}
Assume that $l \overline{\mu}_{s} \, e^{l(\overline{\mu}_{a} + \overline{\mu}_{s})} < e^{-1}$. Let $Q : \Wss_{-} \to \Wss_{-}$ be given by $Q = I - G \Lambda$. There exists a final time $\tau < \infty$ such that $Q: \Wss_{-} \to \Wss_{-}$ is a contraction and $(I - Q) : \Wss_{-} \to \Wss_{-}$ is boundedly invertible. Moreover, the solution to the inverse problem \ref{Def.InvProb} is given by
\begin{eqnarray*}
u_{0} = \sum_{n=0}^{\infty} Q^n G h, \qquad h= \Lambda u_{0}, 
\end{eqnarray*}
with convergence in the $\Wss$-norm.
\end{theorem}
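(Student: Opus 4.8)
The plan is to run a contraction/Neumann-series argument, so the whole proof reduces to showing that $Q = I - G\Lambda$ has operator norm $< 1$ on $\Wss_{-}$ for $\tau$ large. First I would compute $G\Lambda u_{0}$ essentially explicitly. Fix $u_{0} \in \Wss_{-}$, set $u(t) = S(t)u_{0}$ and $h = \Lambda u_{0}$, and compare the time-reversed field $v$ of (\ref{Eqn.750})--(\ref{Eqn.754}) with the \emph{exact} reflected solution $\tilde{u} = Uu$. By the remark preceding (\ref{Eqn.640}), $\tilde{u}$ solves the same reversed evolution $\dot{\tilde{u}} = B\tilde{u}$ with the same inflow trace $(Uh)$, but with the correct initial datum $(Uu)(0)$ rather than the lifted datum $\psi$. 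Hence $w := v - \tilde{u}$ solves the homogeneous reversed problem with vanishing inflow and initial datum $e := \psi - (Uu)(0) \in \Wss_{-}$, so $w(\tau) = R(\tau)e$. Using $(U\tilde{u})(0) = u(0) = u_{0}$ and $U^{2} = I$, this gives $G\Lambda u_{0} = u_{0} + (Uw)(0)$, whence $Qu_{0} = -(Uw)(0)$; since the reflection preserves the $\Wss$-norm, $\| Qu_{0} \|_{\Wss} = \| R(\tau)e \|_{\Wss}$.

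Next I would estimate the initial-data mismatch $e$. Because $\psi$ solves the reversed stationary equation (\ref{Eqn.701}) while both $\psi$ and $(Uu)(0)$ carry the same inflow trace $h_{0} = (Uh)(0)$, the difference $e$ lies in $\Wss_{-}$ and itself solves a reversed stationary problem with source $-c^{-1}B(Uu)(0) = -c^{-1}\dot{\tilde{u}}(0)$. Now $\dot{\tilde{u}}(0) = B\tilde{u}(0)$ equals, up to the norm-preserving reflection, $\dot{u}(\tau) = A S(\tau)u_{0} = S(\tau) A u_{0}$, so the source is controlled by $\| S(\tau) \|_{\Ws}\,\| A \|\,\| u_{0} \|_{\Wss}$ with $\| A \| \le c(l^{-1} + \overline{\mu}_{\rm a} + 2\overline{\mu}_{\rm s})$ as in the proof of Corollary \ref{Cor.001}. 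Applying the stability estimate of Theorem \ref{Thm.550} for the reversed stationary problem then yields $\| e \|_{\Wss} \le (T\| A \|/\beta)\,\| S(\tau) \|_{\Ws}\,\| u_{0} \|_{\Wss}$.

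Combining the two steps, $\| Qu_{0} \|_{\Wss} = \| R(\tau)e \|_{\Wss} \le \| R(\tau) \|_{\Wss}\,\| e \|_{\Wss}$, and Corollary \ref{Cor.001} bounds $\| R(\tau) \|_{\Wss}$ by a fixed multiple of $\| R(\tau) \|_{\Ws}$. Therefore $\| Q \|_{\Wss} \le C_{0}\,\| R(\tau) \|_{\Ws}\,\| S(\tau) \|_{\Ws}$ for a constant $C_{0} = C_{0}(l, \overline{\mu}_{\rm a}, \overline{\mu}_{\rm s})$. Under the hypothesis $l\overline{\mu}_{\rm s}\,e^{l(\overline{\mu}_{\rm a} + \overline{\mu}_{\rm s})} < e^{-1}$, Proposition \ref{Prop.010} forces both $\| S(\tau) \|_{\Ws}$ and $\| R(\tau) \|_{\Ws}$ to decay geometrically in $\tau/T$, so their product tends to $0$ as $\tau \to \infty$. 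Choosing $\tau < \infty$ large enough makes $\| Q \|_{\Wss} < 1$, i.e. $Q$ is a contraction on $\Wss_{-}$; then $I - Q = G\Lambda$ is boundedly invertible via $(I-Q)^{-1} = \sum_{n\ge 0} Q^{n}$, and applying this to $Gh = G\Lambda u_{0} = (I-Q)u_{0}$ gives $u_{0} = \sum_{n=0}^{\infty} Q^{n} Gh$ with convergence in the $\Wss$-norm.

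The main obstacle is the identification carried out in the first two steps: recognizing that the residual $Qu_{0}$ is exactly the reversed flow $R(\tau)e$ of the initial-data mismatch, and that $e$ solves a reversed stationary problem whose source is precisely $\dot{u}(\tau) = S(\tau)Au_{0}$. This is what injects the decaying factor $\| S(\tau) \|_{\Ws}$ into the estimate and lets the product of semigroup norms, rather than a single one, control $\| Q \|$. Without routing the error through the stationary problem (to remain in the stronger $\Wss$-norm via Theorem \ref{Thm.550} and Corollary \ref{Cor.001}), and without the simultaneous exponential decay of \emph{both} semigroups from Proposition \ref{Prop.010}, the coefficient would not drop below $1$. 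The other delicate point is the bookkeeping of the reflection $U$, which must be norm-preserving on $\Wss$ and intertwine $\Kop$ and $\Kop^{*}$ through (\ref{Eqn.999}); this is what makes $\tilde{u} = Uu$ a genuine solution of the reversed evolution and underlies the whole comparison.
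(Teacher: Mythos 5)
Your proposal is correct and is essentially the paper's own argument: you compare the time-reversed flow $v$ with the exact reflected solution $Uu$, exploit $B\psi = 0$ and the semigroup--generator commutation to express the residual $Qu_{0}$ through the product of $R(\tau)$ and $S(\tau)$, and then use the stationary stability (theorem \ref{Thm.550}), corollary \ref{Cor.001} and the exponential decay of proposition \ref{Prop.010} to get $\| Q \|_{\Wss} \lesssim \| R(\tau) \|_{\Ws}\, \| S(\tau) \|_{\Ws} < 1$ for $\tau$ large, finishing with the Neumann series exactly as the paper does. The only deviation is bookkeeping: you apply the stationary estimate to the initial mismatch $e = \psi - (Uu)(0)$ (whose source is $S(\tau)Au_{0}$ up to reflection) and then propagate with $\| R(\tau) \|_{\Wss}$ via corollary \ref{Cor.001}, whereas the paper applies the stationary estimate to $w(\tau)$ and routes $\| S(\tau) \|_{\Ws}$ through corollary \ref{Cor.001}, which merely replaces the paper's constant $1/(\alpha\beta)$ by $1/\beta^{2}$ and changes nothing in the conclusion.
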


\begin{proof}
Following the approach in \cite[Thm. 1]{Ste-Uhl-2009-01}, let $w \in C^{1}( [0,\tau]; \Ws) \cap C([0,\tau]; \Wss_{-}) $ solve the following Cauchy problem,
\begin{eqnarray}
\dot{w}(t) &= B w(t) \quad \text{for $0 \leq t \leq \tau$}, \label{Eqn.770} \\
w(0) &= (Uu)(0) - \psi \in \Wss_{-}.   \label{Eqn.772}
\end{eqnarray}
Notice that this is in the precise form of the reversed Cauchy problem \ref{Def.ReverseProb} and that the initial condition in (\ref{Eqn.772}) truly belongs to $\Wss_{-}$ by design. Hence, theorem \ref{Thm.120} implies that $w(t) = R(t)((Uu)(0)-\psi)$. Since (\ref{Eqn.770}) is satisfied in a strong sense at $t=\tau$, then it follows from theorem \ref{Thm.550} that
\begin{eqnarray}
\| w(\tau) \|_{\Wss} \leq \frac{l}{c \beta} \| \dot{w}(\tau) \|_{\Ws}.  \label{Eqn.775}
\end{eqnarray}

Notice also that $(v + w)$ and $(Uu)$ solve the same initial boundary value problem (\ref{Eqn.640})-(\ref{Eqn.644}). By uniqueness then $w = Uu - v$ for all $t \in [0,\tau]$, in particular if we apply $U$ and evaluate at $t=0$ we obtain 
\begin{eqnarray}
(Uw)(0) = u_{0} - G \Lambda u_{0} = Q u_{0}, \nonumber 
\end{eqnarray}
where we have used the definition of $G$ given in (\ref{Eqn.756}). Therefore, the following estimates hold
\begin{eqnarray*}
\| Q u_{0} \|_{\Wss} &= \| (U w)(0) \|_{\Wss} =  \| w(\tau) \|_{\Wss} \leq \frac{l}{c \beta} \| \dot{w}(\tau) \|_{\Ws} =  \frac{l}{c \beta} \| B w(\tau) \|_{\Ws} \\
& = \frac{l}{c \beta} \| B R(\tau) ((U u)(0) - \psi)  \|_{\Ws} = \frac{l}{c \beta} \| R(\tau) B ((U u)(0) - \psi)  \|_{\Ws} \\
& = \frac{l}{c \beta} \| R(\tau) B (U u)(0) \|_{\Ws} \leq \frac{l}{c \beta} \| R(\tau) \|_{\Ws} \| B \|  \| (Uu)(0) \|_{\Wss} \\
& = \frac{l}{c \beta} \| R(\tau) \|_{\Ws} \|B \|  \| u(\tau) \|_{\Wss} = \frac{l}{c \beta} \| R(\tau) \|_{\Ws} \| B \|  \| S(\tau) u_{0} \|_{\Wss} \\
& \leq \frac{l}{c \beta} \| R(\tau) \|_{\Ws} \| B \| \frac{l}{c \alpha} \| S(\tau) \|_{\Ws} \| A \| \| u_{0} \|_{\Wss}.
\end{eqnarray*}
We have used the norm-preserving properties of the reflection $U$. The estimate (\ref{Eqn.775}) was used in the second inequality and the constants $\alpha$ and $\beta$ are defined in lemma \ref{Lemma.209}. Recall that $B \psi = 0$ from (\ref{Eqn.701}). We have also used the fact that a semigroup commutes with its generator when acting on the domain of the generator. In other words, $R(t)B v = B R(t) v $ for all $v \in \Wss_{-}$. The boundedness of $S(t)$ and $R(t)$ was obtained in theorem \ref{Thm.120} as maps on $\Ws$, and in Corollary \ref{Cor.001} as maps on $\Wss_{-}$. Finally, it is clear that $A : \Wss \to \Ws$ and $B : \Wss \to \Ws$ are bounded with $\|A\| = \|B\| \leq c ( l^{-1} + \overline{\mu}_{a} + 2 \overline{\mu}_{s})$. Hence we obtain the following estimate,
\begin{eqnarray*}
\| Q u_{0} \|_{\Wss} \leq  \frac{ \left( 1 + l (\overline{\mu}_{a} + 2 \overline{\mu}_{s} ) \right)^2 }{ \alpha \beta }    
 \| R(\tau) \|_{\Ws} \| S(\tau) \|_{\Ws} \| u_{0} \|_{\Wss} .
\end{eqnarray*}

Now the stability estimate from proposition \ref{Prop.010} implies that both $\| S(\tau)\|_{\Ws}$ and $\| R(\tau) \|_{\Ws}$ decay exponentially fast as $\tau$ increases. Therefore, there exists a finite time $\tau$ such that $Q : \Wss_{-} \to \Wss_{-}$ is a contraction mapping and the desired results follow from the Neumann series theorem. 
\end{proof}

We point that theorem \ref{Thm.600} directly implies the validity of the first main result expressed as theorem \ref{Thm.MainInv}. Notice that the choice of $\psi$ in (\ref{Eqn.752}) as the solution of the reverse stationary problem (\ref{Eqn.701})-(\ref{Eqn.702}) was crucial in order to stay within the formulation of the space $\Wss$ and to obtain the estimate on the norm of $Q$. These two facts follow from the following two properties:
\begin{itemize}
\item[-] $\psi$ conforms to the boundary data $(U \Lambda u_{0})(t)$ at $t=0$, and
\item[-] $\psi$ belongs to the null space of the generator $B$.
\end{itemize}
From the well-posedness of the reversed stationary problem \ref{Def.SteadyReverseProb}, we see that the above two properties determine $\psi$  uniquely within the space $\Wss$.

To conclude this section, we prove a brief sketch for the proof of theorem \ref{Thm.MainInv2} which relies on generalizing validity of theorem \ref{Thm.600}. First of all, the time reversal operator $G$ from (\ref{Eqn.756}) can be modified to obtain a bounded operator $G : L^2([0,\tau];\Ts_{+}) \to \Ws$. This is accomplished using the concepts of mild solutions and generalized traces as in 
\cite[Section 2]{Kli-Yam-2007}, \cite[Section 14.4]{Mok-1997} or Cessenat \cite{Ces-1984,Ces-1985}. In fact, the measurement operator $\Lambda$ from (\ref{Eqn.630}) can be boundedly extended to $\Lambda : \Ws \to L^2([0,\tau];\Ts_{+})$ as shown in the proof of theorem \ref{Thm.MainControl} in Section \ref{Section:ControlProblem}. However, it would no longer make sense to speak of measured boundary data $(U \Lambda u_{0})(t)$ at time $t = 0$. Hence, the function $\psi$ and the operator $G$ have to be modified as follows. We let $v \in C([0,\tau];\Ws)$ be the mild solution of (\ref{Eqn.750})-(\ref{Eqn.754}) with $\psi \equiv 0$ and $h \in L^2([0,\tau];\Ts_{+})$. Then, following steps analogous to those in the proof of theorem \ref{Thm.600}, we can obtain an estimate on the norm of $Q : \Ws \to \Ws$,
\begin{eqnarray*}
\| Q u_{0} \|_{\Ws} \leq \| R(\tau) \|_{\Ws} \| S(\tau) \|_{\Ws} \| u_{0} \|_{\Ws}, \qquad \text{for all $u_{0} \in \Ws$}.
\end{eqnarray*}

In view of proposition \ref{Prop.010}, we see that there exists $\tau < \infty$ such that $Q : \Ws \to \Ws$ is a contraction mapping and the conclusion of theorem \ref{Thm.600} is valid with convergence in the $\Ws$-norm for initial condition $u_{0} \in \Ws$. Hence, the conclusion of theorem \ref{Thm.MainInv2} follows from the Neumann series theorem.

%%%%%%%%%%%%%%%%%%%%%%%%%%%%%%%%%%%%%%%%%%%%%%%%%
%%%%%%%%%%% NEW SECTION %%%%%%%%%%%%%%%%%%%%%%%%% %%%%%%%%%%%%%%%%%%%%%%%%%%%%%%%%%%%%%%%%%%%%%%%%%

\section{The control problem} \label{Section:ControlProblem}

In this section, we develop the proof of our main result concerning the exact controllability problem the radiative transport. It is well-known in control theory that exact controllability in a Hilbert space setting is equivalent to the continuous observability property for the so-called adjoint problem. In turn, observability is obtained from the solvability of the inverse problem which we have already established in Section \ref{Section:InverseProblem}. The relation between these concepts is made precise using duality arguments which we proceed to describe.

We will need the following \textit{angular-reflection} operator $V : \Ws \to \Ws$ defined by
\begin{eqnarray}
(V w)(x,\theta) = w(x,-\theta), \label{Eqn.860}
\end{eqnarray}
which is clearly unitary.

\begin{proof}[Proof of theorem \ref{Thm.MainControl}]
We wish to construct the adjoint of the measurement operator $\Lambda$ defined in (\ref{Eqn.630}) viewed for now as a densely defined operator $\Lambda : \Wss_{-} \subset \Ws \to L^2([0,\tau];\Ts_{+})$. First, let $u_{0} \in \Wss_{-}$ be arbitrary and $u \in C^{1}( [0,\tau]; \Ws) \cap C( [0,\tau]; \Wss_{-})$ be the unique solution of the problem (\ref{Eqn.622})-(\ref{Eqn.624}) with $u_{0}$ as initial condition. Hence we have that $\gamma_{+} u = \Lambda u_{0}$.

Now let $h \in L^{2}([0,\tau];\Ts_{+})$ be arbitrary, and $\{ h_{k} \}_{k \geq 1} \subset C^{1}([0,\tau];\Ts_{+})$ be a sequence converging to $h$ in the $L^{2}([0,\tau];\Ts_{+})$-norm. For each $h_{k}$ let $v_{k} \in C^{1}([0,\tau]; \Ws) \cap C([0,\tau];\Wss)$ be the strong solution of the problem (\ref{Eqn.001c})-(\ref{Eqn.003c}). Hence, $v_{k}(\tau) = \Upsilon h_{k}$ where $\Upsilon : L^{2}([0,\tau];\Ts_{+}) \to \Ws$ is bounded as defined in (\ref{Eqn.010c}). 

Recall the reflector operator from (\ref{Eqn.633}) and notice that $\phi = U u$ solves
\begin{eqnarray*}
\dot{\phi}(t) = B \phi(t) \quad \text{for $0 \leq t \leq \tau$}, \\
\phi(\tau) = (Uu)(\tau),   \\
\gamma_{+} \phi(t) = 0 \quad \text{for $0 \leq t \leq \tau$}.
\end{eqnarray*}
Now integrate $0 = (\dot{v}_{k} - Av_{k}) \phi$ over the domain $[0,\tau] \times \Omega \times \Sph$ and use integration-by-parts to obtain $\la v_{k}(\tau) , \phi(\tau) \ra_{\Ws} - \la v_{k}(0) , \phi(0) \ra_{\Ws} = \la v_{k}  , \phi \ra_{L^2([0,\tau];\Ts_{-})} - \la v_{k}  , \phi \ra_{L^2([0,\tau];\Ts_{+})}$. Recall that $v_{k}(\tau) = \Upsilon h_{k}$, $v_{k}(0) = 0$, $\gamma_{-}v_{k} = h_{k}$, $\gamma_{+} \phi = 0$, and $\gamma_{-} \phi = \gamma_{-} Uu = U \gamma_{+}u = U \Lambda u_{0}$. Also notice that $\phi(\tau) = V u_{0}$. It follows that $ \la \Upsilon h_{k}  , V u_{0} \ra_{\Ws} =  \la h_{k} , U \Lambda u_{0} \ra_{L^2([0,\tau];\Ts_{-})}$. Recall that $\Upsilon$ is bounded, so after taking the limit $k \to \infty$ we obtain
\begin{eqnarray*}
\la \Upsilon h  , V u_{0} \ra_{\Ws} =  \la h  , U \Lambda u_{0} \ra_{L^2([0,\tau];\Ts_{-})}, \quad \text{for all $h \in L^{2}([0,\tau];\Ts_{+})$ and $u_{0} \in \Wss_{-}$},
\end{eqnarray*}
where $U$ and $V$ are the reflector operators defined in (\ref{Eqn.633}) and (\ref{Eqn.860}), respectively. Because $V=V^{*}=V^{-1}$ and $U=U^{*}=U^{-1}$, we find that 
\begin{eqnarray}
\Lambda^{*} = V \Upsilon U . \label{Eqn.1000}
\end{eqnarray}
Since $V$, $\Upsilon$ and $U$ are bounded operators then so is $\Lambda^{*}$ and consequently $\Lambda$ can be boundedly extended to $\Lambda : \Ws \to L^2([0,\tau];\Ts_{+})$. We have already used this extension in Section \ref{Section:InverseProblem} to obtain theorem \ref{Thm.MainInv2}. This means that $\Lambda : \Ws \to L^2([0,\tau];\Ts_{+})$ is injective and has a closed range. Because $\Upsilon^{*} = U \Lambda V$, with $U$ and $V$ being boundedly invertible, it follows from the closed range theorem that both $\rangesp{\Upsilon^{*}} \subset L^{2}([0,\tau];\Ts_{-})$ and $\rangesp{\Upsilon} \subset \Ws$ are closed.

The exact controllability problem \ref{Def.ControlProb} reduces to showing the surjectivity of the control operator $\Upsilon$. Since $\rangesp{\Upsilon}$ is closed in $\Ws$, then basic duality theory tells us that $\rangesp{\Upsilon} = \nullsp{\Upsilon^{*}}^{\perp} = \nullsp{U \Lambda V}^{\perp}$. Now, we know that $U$ and $V$ are isometries and $\Lambda$ is injective. Hence $\rangesp{\Upsilon} = \Ws$, which establishes the exact boundary controllability of the transport field. Moreover, from pseudo-inverse theory for Hilbert spaces \cite{Deu-2001}, we obtain the minimum-norm control given by $h_{\rm min} = \Upsilon^{*}(\Upsilon \Upsilon^{*})^{-1} v_{\star}$, which concludes the proof.
\end{proof}

%%%%%%%%%%%%%%%%%%%%%%%%%%%%%%%%%%%%%%%%%%%%%%%%%
%%%%%%%%%%% NEW SECTION %%%%%%%%%%%%%%%%%%%%%%%%% %%%%%%%%%%%%%%%%%%%%%%%%%%%%%%%%%%%%%%%%%%%%%%%%%

\section{Beyond the weak scattering regime} \label{Section:BeyondWeak}

Here we address an approach to overcome the limitations imposed by the weak scattering assumption $l \overline{\mu}_{s} \, e^{l(\overline{\mu}_{a} + \overline{\mu}_{s})} < e^{-1}$ employed so far in this paper. We do this in the context of mild solutions of the transport problem for initial data in $\Ws$ and generalized boundary traces in $L^2([0,\tau];\Ts)$. 

From the proof of theorem \ref{Thm.600}, we see that we have reduced the inverse problem to the following equation
\begin{eqnarray}
(I- Q(\tau)) u_{0} = Gh, \qquad h = \Lambda u_{0}. \label{Eqn.2000}
\end{eqnarray}
Our reconstruction method is based on the decaying behavior of the operator $Q(\tau)$ as $\tau \to \infty$ which in turn is guaranteed by the weak scattering assumption $l \overline{\mu}_{s} \, e^{l(\overline{\mu}_{a} + \overline{\mu}_{s})} < e^{-1}$. However, a closer look into the proof reveals that $Q(\tau)$ can be expressed as follows,
\begin{eqnarray*}
Q(\tau) = V R(\tau) V S(\tau), 
\end{eqnarray*}
where $S$ and $R$ are the semigroups for the direct and reversed transport problems \ref{Def.DirectProb} and \ref{Def.ReverseProb}, respectively. Also, $V : \Ws \to \Ws$ is the unitary operator defined in (\ref{Eqn.860}). It turns out that the semigroup $S(\tau) : \Ws \to \Ws$ can be shown to be compact for $\tau > T$ provided that the scattering kernel $\kappa$ satisfies certain regularity in the sense of Mokhtar-Kharroubi \cite{Mokhtar-Kharroubi-2005}. Therefore, the operator $Q(\tau) : \Ws \to \Ws$ is compact for all $\tau > T$ and this compactness is independent of the size of the absorption and scattering coefficients $\mu_{\rm a}$ and  $\mu_{\rm s}$.

Therefore (\ref{Eqn.2000}) is of Fredholm type, and $u_{0} \in \Ws$ can be reconstructed in a stable manner provided that $1$ is not an eigenvalue of the compact operator $Q(\tau)$. Moreover, if this is the case, then the exact controllability theorem \ref{Thm.MainControl} holds true for $\tau > T$ with no need to assume weakly scattering media. Unfortunately, the Fredholm theory does not provide an explicit algorithm to invert the operator $(I-Q)$ nor an a-priori estimate for the constant of stability.

%%%%%%%%%%%%%%%%%%%%%%%%%%%%%%%%%%%%%%%%%%%%%%%%%%
%%%%%%%%%%%% NEW SECTION %%%%%%%%%%%%%%%%%%%%%%%%% %%%%%%%%%%%%%%%%%%%%%%%%%%%%%%%%%%%%%%%%%%%%%%%%%

\ack
The author would like to thank his collaborators, Liliana Borcea and Ricardo Alonso, for fruitful discussions, and Guillaume Bal for his suggestions given during the \textit{Coupled Physics Inverse Problems} workshop at the CMM, Universidad de Chile, January 2013. The author also extends gratitude to the anonymous referees who provided the most constructive observations. This work was partially supported by the AFSOR Grant FA9550-12-1-0117, the ONR Grant N00014-12-1-0256 and by the NSF Grant DMS-0907746.

%%%%%%%%%%%%%%%%%%%%%%%%%%%%%%%%%%%%%%%%%%%%%%%%%
%%%%%%%%%%% NEW SECTION %%%%%%%%%%%%%%%%%%%%%%%%% %%%%%%%%%%%%%%%%%%%%%%%%%%%%%%%%%%%%%%%%%%%%%%%%%

%\appendix
%
%\section{Mild solutions} \label{App.A}

%%%%%%%%%%%%%%%%%%%%%%%%%%%%%%%%%%%%%%%%%%%%%%%%%
%%%%%%%%%%% NEW SECTION %%%%%%%%%%%%%%%%%%%%%%%%% %%%%%%%%%%%%%%%%%%%%%%%%%%%%%%%%%%%%%%%%%%%%%%%%%

\section*{References}

\bibliographystyle{unsrt}
\bibliography{Biblio}

\end{document}